\newsavebox\myboxA
\newsavebox\myboxB
\newlength\mylenA
\newcommand*\xoverline[2][0.75]{%
    \sbox{\myboxA}{$\m@th#2$}%
    \setbox\myboxB\null
    \ht\myboxB=\ht\myboxA%
    \dp\myboxB=\dp\myboxA%
    \wd\myboxB=#1\wd\myboxA
    \sbox\myboxB{$\m@th\overline{\copy\myboxB}$}
    \setlength\mylenA{\the\wd\myboxA}
    \addtolength\mylenA{-\the\wd\myboxB}%
    \ifdim\wd\myboxB<\wd\myboxA%
       \rlap{\hskip 0.5\mylenA\usebox\myboxB}{\usebox\myboxA}%
    \else
        \hskip -0.5\mylenA\rlap{\usebox\myboxA}{\hskip 0.5\mylenA\usebox\myboxB}%
    \fi}
\newcommand{\lowerromannumeral}[1]{\romannumeral#1\relax}
\newtheorem{theorem}{Theorem}[section]
\newtheorem{lemma}[theorem]{Lemma}
\newtheorem{cor}[theorem]{Corollary}
\theoremstyle{definition}
\newtheorem{definition}[theorem]{Definition}
\theoremstyle{remark}
\newtheorem{remark}[theorem]{Remark}
\newtheorem{claim}{Claim}
\newtheorem{case*}{Case}
\numberwithin{equation}{section}
\begin{document}

\title{Link diagrams with low Turaev genus}

\author{Seungwon Kim}
\address{Department of Mathematics, The Graduate Center, CUNY}
\email{skim2@gradcenter.cuny.edu}

\maketitle
\begin{abstract}
We classify link diagrams with Turaev genus one and two in terms of an alternating tangle structure of the link diagram. The proof involves surgery along simple loops on the Turaev surface, called cutting loops, which have corresponding cutting arcs that are visible on the planar link diagram.  These also provide new obstructions for a link diagram on a surface to come from the Turaev surface algorithm. We also show that inadequate Turaev genus one links are almost-alternating.
\end{abstract}

\section{Introduction}
In \cite{Tu1}, Turaev introduced the Turaev surface of a link diagram to use in a new proof of Tait's conjecture. Its minimal genus among all possible diagrams of a given link is a link invariant, which is called the Turaev genus of the link.
The Turaev surface is a Heegaard surface of $S^3$, such that the link diagram on its Turaev surface is alternating, and its projection divides the surface into a disjoint union of discs. These properties imply that the Turaev genus measures how far a given link is from being alternating (see \cite{CK,DFKLS}).

In this paper, we classify link diagrams with low Turaev genus in terms of an alternating tangle structure on the link diagram. An alternating tangle structure on a diagram $D$ on $S^2$ provides a decomposition of $D$ into maximally connected alternating tangles, defined by Thistlethwaite \cite{T1}, and below in Section \ref{definitions}. Let $g_T(D)$ denote the Turaev genus of $D$, which is defined in Section 2 below.

Our main results are the following:
\begin{theorem}\label{Turaev1}
Every prime connected link diagram $D$ on $S^2$ with $g_T (D)=1$ is a cycle of alternating $2$-tangles, as shown in the figure below. 
\end{theorem}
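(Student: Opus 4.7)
The plan is to exploit the genus-one Turaev surface $F$ of $D$ directly: since $F$ is a torus with $D$ drawn alternatingly on it and $F\setminus D$ a union of discs, the only way $D$ can fail to be alternating on $S^2$ is that some crossings have been ``twisted up'' around the single handle of $F$. I would first locate, on $F$, an essential simple closed curve $\gamma$ (a \emph{cutting loop}) that is disjoint from $D$ except where it crosses the saddles transversally, and whose image under the projection $F\to S^2$ is a union of \emph{cutting arcs} on the planar diagram $D$. The existence and structure of such a loop should follow from the cutting-loop/cutting-arc correspondence developed earlier in the paper, applied to the single generator of $H_1(F)$.

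Next, I would analyze how a cutting arc meets $D$. Because $D$ is alternating on $F$ and the two complementary families of discs are the all-$A$ and all-$B$ state discs, a cutting arc must travel along a sequence of adjacent state discs and cross $D$ transversally only at the saddle arcs. I expect to show that any minimal cutting arc on $D$ meets the diagram in exactly two points, so that doing surgery along $\gamma$ on $F$ (which produces a sphere, because $\gamma$ is non-separating in the torus) corresponds on $S^2$ to cutting $D$ along a pair of disjoint arcs that together meet $D$ in $4$ points and partition $D$ into a $2$-tangle $T$ plus its complement. Since surgery kills the handle, $T$ must be alternating on a disc in $S^2$, i.e., $T$ is an alternating $2$-tangle.

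Now comes the step I expect to be the main obstacle: upgrading this single surgery into a full cyclic decomposition. The point is that there is generally not a unique cutting loop; there is a whole family of parallel essential loops on $F$, and they produce a family of pairwise disjoint cutting arc pairs on $D$. I would choose a maximal such family, and then use the fact that $F\setminus D$ consists of discs together with the primeness of $D$ to argue (\emph{i}) that distinct maximal tangles in the decomposition cannot be merged into a larger alternating tangle, and (\emph{ii}) that no tangle in the decomposition is a trivial $2$-tangle isotopic to two parallel strands, since this would contradict either primeness of $D$ or the minimality of the Turaev genus being $1$ (not $0$).

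Finally, I would verify that the resulting $2$-tangles are glued cyclically rather than in a tree pattern. This is a consequence of the cutting loops all being parallel on the torus: cutting $S^2$ along the arcs gives a family of discs whose gluing pattern must reassemble to the original $S^2$ while the corresponding surgeries on $F$ reassemble a torus, and the only gluing pattern of $2$-tangles whose Turaev surface is a torus (rather than a higher-genus surface or a sphere) is the cyclic one. Primeness of $D$ ensures there are at least two tangles in the cycle (otherwise $D$ would already be alternating or composite), completing the classification.
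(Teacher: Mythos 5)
Your general toolkit is the same as the paper's (cutting loops on $F(D)$, their planar cutting arcs, and surgery that drops the Turaev genus by one, which is the paper's Theorem \ref{surgerytheorem}), but the step you yourself flag as the main obstacle is exactly where the proposal has a genuine gap, and the bridge you propose does not close it. The assertion ``since surgery kills the handle, $T$ must be alternating'' is not valid: surgery along a single cutting arc produces a diagram of Turaev genus zero, but such a diagram need only be a \emph{connected sum} of alternating diagrams --- it can retain non-alternating edges at its composite circles (this is precisely the complication the paper has to fight in Theorem \ref{concentric} for the genus-two case). Consequently, neither the alternating-ness of your complementary pieces nor, more importantly, the \emph{completeness} of your maximal family of parallel cutting arcs is established: nothing in the proposal shows that every non-alternating edge of $D$ is met by one of the chosen arcs, so a complementary piece could a priori contain non-alternating edges far from all your loops, and then it is not an alternating $2$-tangle. ``Choose a maximal family and invoke primeness and the disc decomposition'' restates the goal rather than proving it.

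The paper's mechanism for forcing the cyclic structure is a homological/intersection argument on the torus that your outline lacks: by primeness (any two faces share at most one edge) and the fact that face boundaries can be isotoped on $F(D)$ to meet only at midpoints of non-alternating edges, the boundary of every face containing a non-alternating edge is \emph{essential} on $F(D)$; taking the cutting loop to be a meridian, the white-face boundaries (all lying in one handlebody) must be longitudes or trivial, every longitude meets the meridian, and only two white faces meet it --- hence exactly two white faces carry all the non-alternating edges. This is what makes the channel region an annulus between those two faces and lines up the non-alternating edges cyclically, after which joining adjacent midpoints by arcs parallel to $\delta$ cuts $D$ into alternating $2$-tangles. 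You would need to supply an argument of this strength. Two smaller inaccuracies: a cutting loop meets $D$ twice away from the saddles (not ``disjoint from $D$ except at the saddles''), and a single cutting loop corresponds to a single cutting arc meeting $D$ in two points, not to a pair of arcs meeting $D$ in four points.
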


\centerline{\includegraphics[height=0.5in]{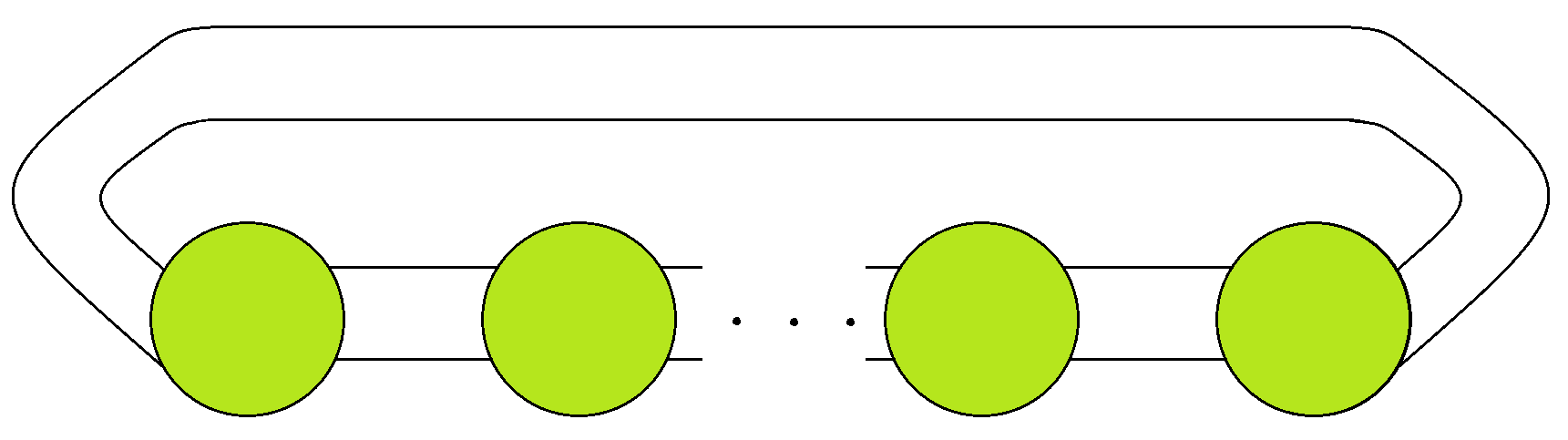}}

\begin{theorem}\label{Turaev2}
Every prime connected link diagram $D$ on $S^2$ with $g_T(D)=2$ has one of the eight alternating tangle structures shown below in Figure \ref{genus2}.
\end{theorem}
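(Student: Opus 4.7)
My plan is to iterate the cutting-loop construction used for the genus-one case. Let $\Sigma$ be the Turaev surface of $D$, which has genus two. By the existence of cutting loops on a positive-genus Turaev surface (the main technical tool developed earlier and already used for Theorem \ref{Turaev1}), I can find an essential simple closed curve $\gamma_1 \subset \Sigma$ whose corresponding cutting arc $\alpha_1 \subset S^2$ splits $D$ into two sub-diagrams that are alternating tangles. Since $\Sigma$ has genus two, $\gamma_1$ is either separating or non-separating, so cutting along $\gamma_1$ produces either two once-punctured tori or a single twice-punctured torus.

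In the separating case, each of the two once-punctured tori is itself a Turaev-type surface for a connected alternating sub-tangle of $D$, so I would apply the genus-one classification of Theorem \ref{Turaev1} to each side. Each side becomes a cycle of alternating $2$-tangles in which one tangle slot is occupied by $\alpha_1$, and the two cycles glue along $\alpha_1$. Keeping track of the length of each cycle and the way they are identified along the shared arc produces a small, explicit list of configurations. In the non-separating case, the twice-punctured torus still has positive genus, so I would extract a second cutting loop $\gamma_2$ on it, yielding a second cutting arc $\alpha_2$ on $S^2$. Depending on whether $\gamma_2$ separates the two boundary circles left by $\gamma_1$, whether $\gamma_2$ is itself separating or non-separating on the punctured torus, and whether $\alpha_2$ is disjoint from $\alpha_1$ on $S^2$, I get a finite set of sub-cases, each of which packages $D$ as a union of alternating tangles glued along $\alpha_1$ and $\alpha_2$.

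I would then consolidate the outputs of both cases, mod out by the natural symmetries of tangle diagrams (rotations and reflections of the cycles, swapping of the two tori in the separating case), and match each resulting configuration with one of the eight pictures in Figure \ref{genus2}, using the primeness hypothesis on $D$ to rule out splittings that yield a lower-genus Turaev surface or a disconnected alternating tangle.

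The main obstacle is the case analysis in the non-separating case: the second cutting arc $\alpha_2$ can meet $\alpha_1$ on $S^2$ in several combinatorial patterns, and I must show that only the listed patterns survive once I demand that the resulting pieces are maximally connected alternating tangles. A secondary technical point is the converse check, namely that each of the eight structures is not forced down to $g_T \le 1$ by a further cutting loop; I expect to handle this by showing that after all available cutting arcs are drawn, each remaining tangle is essentially alternating, so no new cutting loop exists and the Turaev genus is exactly two.
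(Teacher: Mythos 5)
Your overall strategy --- find a cutting loop, surger to drop the genus from two to one, and then invoke the genus-one classification --- is the same skeleton as the paper's proof, but two of your key steps do not hold as stated, and the central technical difficulty is missing. First, the dichotomy ``$\gamma_1$ separating or non-separating'' is spurious: by the construction in Theorem \ref{surgerytheorem}, a cutting loop of a prime diagram is automatically non-separating (a separating cutting loop would disconnect $D'$ and hence force $D$ to be composite), so your separating case never occurs; moreover your description of that case rests on the incorrect premise that the cutting arc ``splits $D$ into two sub-diagrams that are alternating tangles.'' A cutting arc is only an arc joining two points where an $s_A$ circle and an $s_B$ circle meet $D$; surgery along it reduces the Turaev genus by one but does not by itself exhibit any alternating tangle decomposition.

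Second, and more seriously, after you surger $D$ along the first cutting arc the resulting planar diagram $D_1$ with $g_T(D_1)=1$ need \emph{not} be prime, so Theorem \ref{Turaev1} cannot be applied to it directly --- this is exactly the gap your plan does not address. The paper's proof spends its main effort here (Theorem \ref{concentric}): it shows that the composite circles of $D_1$ are concentric, surgers along the black arcs of all composite circles to obtain exactly one prime genus-one diagram $T$ (a cycle of alternating $2$-tangles by Theorem \ref{Turaev1}) together with prime alternating diagrams, and then reconstructs $D$ by re-attaching these alternating pieces along ``attaching edges.'' The eight structures of Figure \ref{genus2} emerge from a case analysis over which face of $T$ (channel-region face with a cutting arc, face inside an alternating tangle adjacent to one or two such faces, etc.) receives the re-attached pieces and over the signs of the attached tangles --- not from intersection patterns of two cutting arcs $\alpha_1,\alpha_2$ on $S^2$ as you propose. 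Without controlling the composite structure of $D_1$ and the reattachment data, your case list cannot be made exhaustive, so as written the argument has a genuine gap. (The ``converse check'' you worry about is not needed: the theorem only asserts that every genus-two diagram has one of the eight structures.)
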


Green discs represent maximally connected alternating tangles, and black arcs are non-alternating edges of $D$.
In Figure \ref{genus2}, ribbons denote an even number of linearly connected alternating $2$-tangles: 
\includegraphics[height=0.15in]{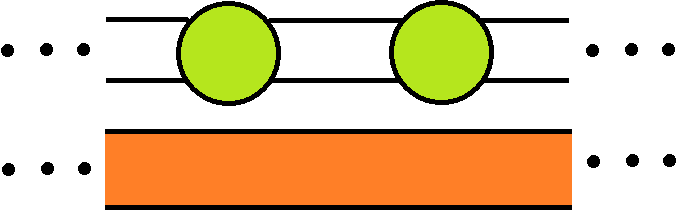}

Armond and Lowrance \cite{AL} proved a similar classification independently at the same time. They classified link diagrams with Turaev genus one and two in terms of their alternating decomposition graphs upto graph isomorphism. While their proof is primarily combinatorial, our proof is primarily geometric. Our result is also somewhat stronger; we classify all possible embeddings of alternating decomposition graphs into $S^2$. 
Their graphs can be obtained from our Figure \ref{genus2} simply by erasing the colors from the ribbons, and contracting the boundaries of the alternating tangles into vertices. Our cases 1, 3, 6 give their case 2, our cases 2, 5 give their case 3, and the other cases correspond bijectively, with our cases 4, 7, 8 giving their cases 1, 4, 5 respectively.
\begin{figure}[h]
 
\begin{subfigure}{0.3\textwidth}
\centering
\includegraphics[height=1cm]{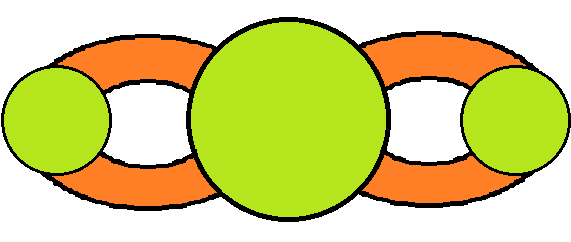} 
\caption{Case 1}
\label{case1}
\end{subfigure}
\begin{subfigure}{0.3\textwidth}
\centering
\includegraphics[height=1cm]{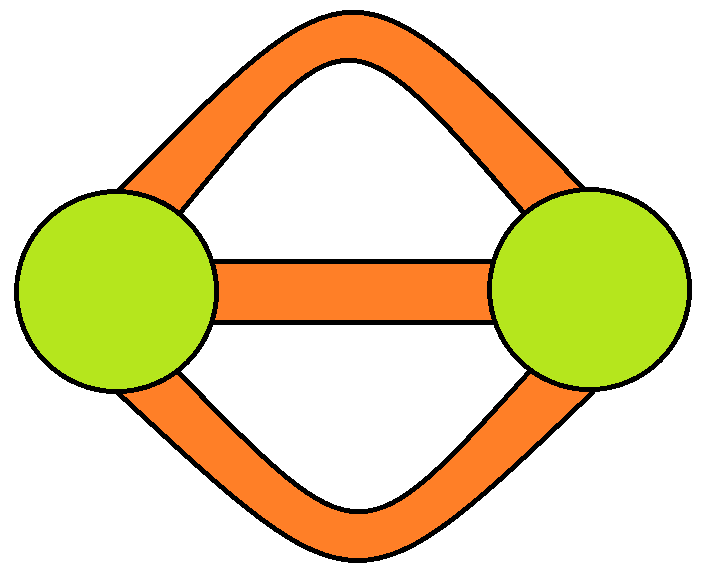}
\caption{Case 2}
\label{case2}
\end{subfigure}
\begin{subfigure}{0.3\textwidth}
\centering
\includegraphics[height=1cm]{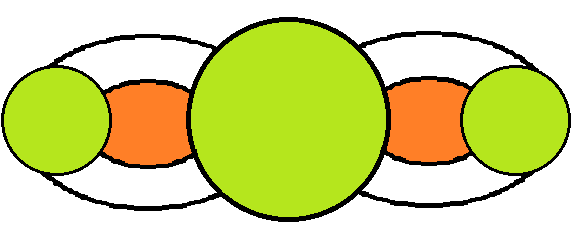}
\caption{Case 3}
\label{case3}
\end{subfigure}
\begin{subfigure}{0.3\textwidth}
\centering
\includegraphics[height=1cm]{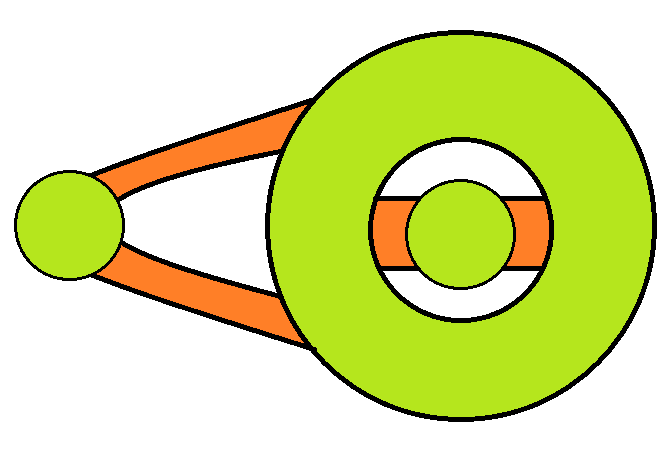}
\caption{Case 4}
\label{case4}
\end{subfigure}
\begin{subfigure}{0.3\textwidth}
\centering
\includegraphics[height=1cm]{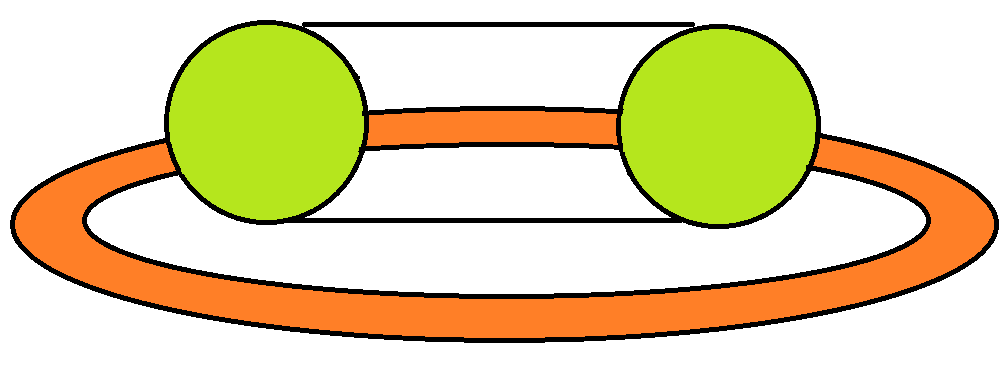}
\caption{Case 5}
\label{case5}
\end{subfigure}
\begin{subfigure}{0.3\textwidth}
\centering
\includegraphics[height=1cm]{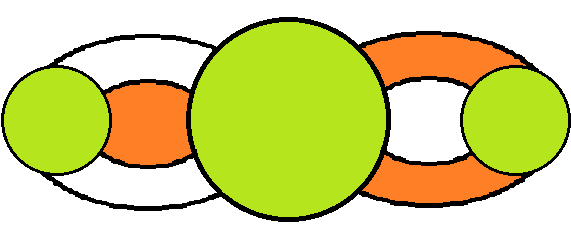}
\caption{Case 6}
\label{case6}
\end{subfigure}
\begin{subfigure}{0.3\textwidth}
\centering
\includegraphics[height=1.5cm]{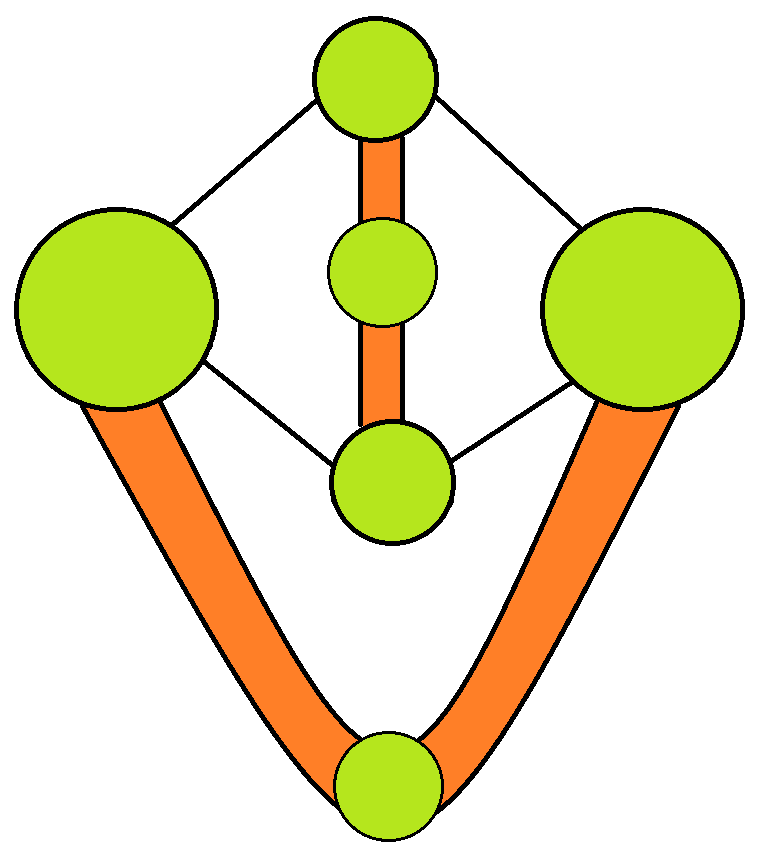}
\caption{Case 7}
\label{case7}
\end{subfigure}
\begin{subfigure}{0.3\textwidth}
\centering
\includegraphics[height=1.5cm]{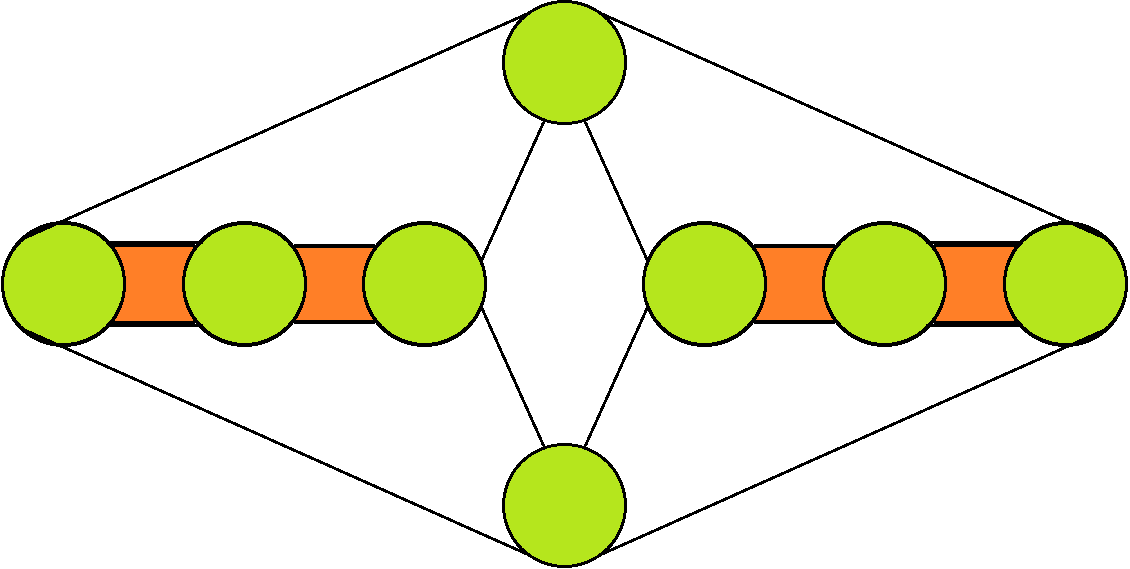}
\caption{Case 8}
\label{case8}
\end{subfigure} 
\caption{}
\label{genus2}
\end{figure}

A non trivial link diagram is \textit{almost-alternating} if one crossing change makes the diagram alternating. A non trivial link is \textit{almost-alternating} if it admits an almost-alternating diagram. It is conjectured that all Turaev genus one links are almost alternating. This conjecture has been proved for non-alternating Montesinos links, and semi-alternating links \cite{A1,AK,L1}. We prove this conjecture for inadequate links using our new geometric methods.

\begin{theorem}\label{almostalternating}
Let $L$ be an inadequate non-split prime link with $g_T(L)=1$. Then $L$ is almost-alternating.
\end{theorem}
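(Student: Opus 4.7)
The plan is to start from the cycle-of-alternating-2-tangles description of $D$ provided by Theorem \ref{Turaev1}, leverage the failure of adequacy to pinpoint a specific non-alternating edge whose neighborhood can be simplified, and then verify that a single crossing change at that edge produces an alternating diagram.

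First I would fix a prime connected diagram $D$ of $L$ realizing $g_T(D)=1$ and write it, using Theorem \ref{Turaev1}, as a cyclic sequence of alternating $2$-tangles $T_1,\ldots,T_n$ joined by non-alternating edges. Since $L$ is inadequate, $D$ itself is not adequate, so without loss of generality the all-$A$ state $s_A(D)$ contains a state circle $\gamma$ that touches itself at some crossing $c$. Because each $T_i$ is an alternating tangle, hence itself $A$- and $B$-adequate, the self-touching of $\gamma$ at $c$ cannot be confined to a single tangle; therefore $\gamma$ must traverse at least one of the non-alternating edges connecting consecutive tangles in the cycle.

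Next I would interpret $\gamma$ using the cutting arc and cutting loop machinery developed earlier in the paper. The two branches of $\gamma$ emanating from $c$, together with a short arc through $c$, bound a region whose dual on the Turaev torus is a simple loop; on the planar side this gives a cutting arc that separates the cyclic picture into two sub-arcs of tangles. A case analysis of where this cutting arc lands relative to the non-alternating edges — combined with the fact that the Turaev surface has genus exactly one, so cutting loops have very restricted topological behaviour — should force the tangle $T_i$ containing $c$ to be of a very constrained type at the boundary: essentially a single alternating crossing that can be flyped next to the offending non-alternating edge. Performing the flype if necessary yields an equivalent diagram $D'$ of $L$ with $g_T(D')=1$ in which all but one non-alternating edge have been absorbed and $c$ is the unique crossing obstructing alternation.

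Finally I would change the crossing at $c$ in $D'$ and verify directly that the new diagram is alternating: the change reverses the over/under pattern at the sole remaining non-alternating junction, while the rest of $D'$, being built from alternating tangles whose boundary patterns now align, stays alternating globally. Non-triviality of the resulting almost-alternating diagram follows because $L$ is non-split and prime, so $D'$ cannot be reducible to the unknot by a single crossing change alone. The main obstacle I anticipate is precisely the middle step: a priori a long cycle of alternating tangles could carry many non-alternating edges, so one crossing change should not suffice to alternate the diagram. The job of the cutting arc analysis, driven by the existence of the single self-touching circle $\gamma$, is to show that inadequacy is a strong enough hypothesis to collapse the cycle to effectively one non-alternating junction, which is where I expect the heart of the argument to lie.
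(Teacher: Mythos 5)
Your overall frame (start from the cycle-of-$2$-tangles structure of Theorem \ref{Turaev1}, locate a loop crossing, and tie its state circle to the cutting arc/cutting loop machinery) is the same as the paper's, but there is a genuine gap at exactly the step you yourself flag as the heart of the argument, and the way you set things up makes that step unrecoverable. By passing ``without loss of generality'' to a single self-touching circle of $s_A$, you have replaced the hypothesis that the \emph{link} $L$ is inadequate by the far weaker statement that one chosen diagram fails to be $A$-adequate. That weaker hypothesis is satisfied, for example, by any Turaev genus one diagram of a merely semi-adequate link (such links have no adequate diagram), so if your middle step -- ``the cutting-arc analysis should force the tangle containing $c$ to be essentially a single crossing, collapsing the cycle to one non-alternating junction'' -- could be completed from that input alone, you would have proved that a much larger class of Turaev genus one links is almost-alternating, which is still an open conjecture and is not claimed by the paper. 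No argument is offered for that collapse; it is only asserted that the restricted topology of genus one ``should'' force it, and with only one $A$-loop crossing in hand it simply does not: a long cycle of alternating $2$-tangles with loop crossings all of one type is perfectly consistent with everything you have assumed.

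The paper's proof uses inadequacy of $L$ in an essential, two-sided way that your sketch never touches. Since $L$ has no semi-adequate diagram, \emph{every} genus one diagram of $L$ carries both $A$-loop and $B$-loop crossings. Lemma \ref{core-cuttingloop} (via Lemma \ref{intersectionnumber}) shows the core loop of each loop crossing is isotopic to a cutting loop, so all loop crossings lie over parallel cutting arcs; flypes then collect them into a single twist region and Reidemeister II moves cancel pairs, all without changing the Turaev genus. If more than two alternating tangles survive, the remaining loop crossings all sit in that one twist region, none is an $AB$-loop crossing, and they all have the same sign, so the diagram is semi-adequate -- contradicting inadequacy of the link. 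Hence exactly two tangles remain, both meridian and longitude cutting loops are available, and the final configuration analysis shows $B$-loop crossings occur precisely when one of the two tangles is a single crossing, which is the almost-alternating picture. Your final step (``one crossing change at $c$ repairs the sole remaining non-alternating junction'') presupposes this collapse to a single junction, which your argument has not established and cannot establish without invoking the $B$-side of inadequacy.
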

\subsection{\bf{Acknowledgements}}
The author thanks Ilya Kofman and Abhijit Champanerkar for helpful comments and guidance.
\section{Turaev genus}
A link diagram $D$ on a surface $F$ is a projection of a link $L$ onto $F$, which is a $4$-valent graph on $F$ such that each vertex is identified as an over or under crossing of $D$. For each crossing in $D$, we put a crossing ball so that $L$ lies on $F$ except near crossings of $D$, where $L$ lies on a crossing ball as shown in the figure below (see \cite{M1,ADK}). 

\centerline{\includegraphics[scale=0.2]{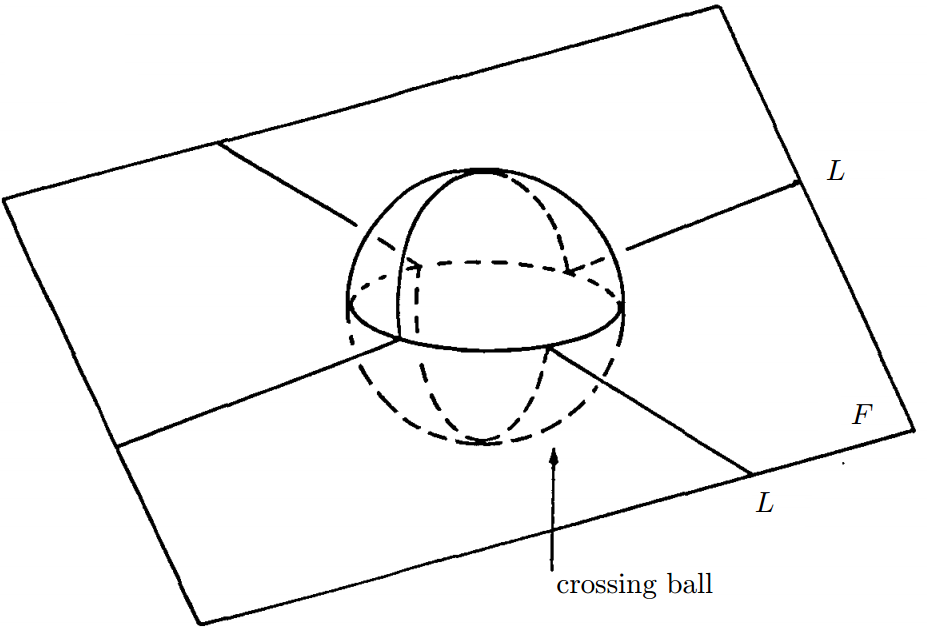}}

According to \cite{ADK}, we call this a crossing ball configuration of the link $L$ corresponding to the diagram $D$. With this configuration, we can obtain the $A$-smoothing and the $B$-smoothing as shown: \includegraphics[height=0.3in]{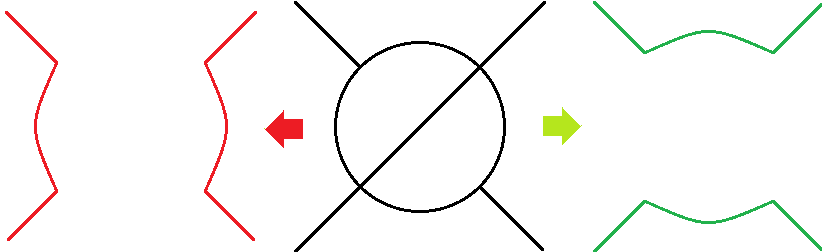}

A state $s$ of $D$ on $S^2$ is a choice of smoothing at every crossing, resulting in a disjoint union of circles on $S^2$. Let $|s|$ denote the number of circles in $s$. Let $s_A$ denote the all-$A$ state, for which every crossing of $D$ is replaced by an $A$-smoothing. Similarly, $s_B$ is the all-$B$ state of $D$.

Now, as we push $s_A$ up and $s_B$ down, then each state circle sweeps out an annulus. We can glue all such annuli and equatorial discs of each crossing ball to get a cobordism between $s_A$ and $s_B$. Note that each equatorial disc is a saddle of the cobordism.

For any link diagram $D$, the Turaev surface $F(D)$ is obtained by attaching $|s_A| + |s_B|$ discs to
all boundary circles of the cobordism above. Note that the crossing ball configuration of $D$ on $S^2$ induces a crossing ball configuration of $D$ on $F(D)$, hence, we can also consider $D$ as a link diagram on $F(D)$.

The Turaev genus of $D$ is defined by
\begin{equation}
g_T (D)=g(F(D))=(c(D)+2 - |s_A| - |s_B|)/2.
\end{equation}

The Turaev genus of any non-split link $L$ is defined by
\begin{equation}
g_T (L) = min\{\, g_T (D) \: | \: \text{$D$ is a diagram of $L$}\} .
\end{equation}

The properties below follow easily from the definitions (see \cite{CK}).

\begin{enumerate}
\renewcommand{\labelenumi}{(\roman{enumi})}
\item $F(D)$ is an unknotted closed orientable surface in $S^3$; i.e., $S^3 - F(D)$ is a disjoint
union of two handlebodies.
\item $D$ is alternating on $F(D)$.
\item $L$ is alternating if and only if $g_T (L) = 0$, and if $D$ is an alternating diagram then $F(D) = S^2$.
\item $D$ gives a cell decomposition of $F(D)$,
for which the 2-cells can be checkerboard colored on $F(D)$, with discs corresponding
to $s_A$ and $s_B$ respectively colored white and black.
\item This cell decomposition is a Morse decomposition of $F(D)$, for which $D$ and the
crossing saddles are at height zero, and the $|s_A|$ and $|s_B|$ 2-cells are the maxima and
minima, respectively.
\end{enumerate}

We will say that a link diagram $D$ on a surface $F$ is cellularly embedded if $F-D$ consists of open discs.


\section{Definitions}\label{definitions}
Throughout this paper, let $D$ be a connected link diagram on $S^2$ which is checkerboard colored. 
An edge of $D$, joining two crossings of $D$, is \textit{alternating} if one end is an underpass and the other end an overpass. Otherwise, an edge is \textit{non-alternating}. 
$D$ is \textit{prime} if every simple loop on $S^2 - \{\text{crossings}\}$ which intersects $D$ in two points bounds a disc on $S^2$ which does not have any crossings inside. Otherwise, D is said to be \textit{composite} and any such simple loop that has crossings on both sides is called a \textit{composite circle} of $D$.
We will say that a crossing of $D$ is positive or negative, respectively, as shown: 
\includegraphics[height=0.2in]{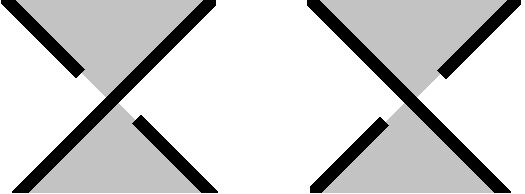}
In each alternating tangle all crossings have the same sign, so the tangle is either positive or negative.

An \textit{alternating tangle structure on a diagram $D$} \cite{T1} is defined as follows. For every non-alternating edge of $D$, take two points in the interior. Inside each face of $D$ containing non-alternating edges, pairs of such points are to be joined by disjoint arcs in the following way: Every arc joins two adjacent points on the boundary of the face, and these points are not on the same edge of $D$. Then the union $\Gamma$ of every arc is a disjoint set of simple loops on $S^2$. Let $\Delta$ be the closure of one of the components of $S^2 - \Gamma$ containing at least one crossing of $D$, then each edge of $D$ entirely contained in $\Delta$ is alternating. 

We will call the pair $(\Delta, \Delta \cap D)$ a \textit{maximally connected alternating tangle} of $D$. Let $n$ be the number of all the maximally connected alternating tangles of $D$. We will call ($D$, $\Delta_1 \cap D$, $\Delta_2 \cap D$, $\cdots$, $\Delta_n \cap D$) an \textit{alternating tangle structure} of $D$ and the closure of a component of $S^2 - \{\Delta_1, \cdots \Delta_n\}$ a \textit{channel region} of $D$. 



An alternating tangle structure of $D$ is a \textit{cycle of alternating 2-tangles} if it satisfies the following properties :
\begin{enumerate}
\renewcommand{\labelenumi}{(\roman{enumi})}
\item Every maximally connected alternating tangle of $D$ is a pair of a disc and an alternating $2$-tangle,
\item Any pair of maximally connected alternating tangles is connected with either two arcs or zero arcs in the channel region.
\end{enumerate}


Our key tools are the cutting loop and the cutting arc. As defined below, a cutting loop is a simple loop on the Turaev surface which is a topological obstruction for a given Heegaard surface with an alternating diagram on it to be the Turaev surface. A cutting arc is a simple arc on $S^2$ which is used to identify a cutting loop.

Let $D$ be a prime diagram. We can isotope $s_A$ and $s_B$ so that $s_A \cap s_B \cap D = $ \{midpoints of non-alternating edges of $D$\}. A \textit{cutting arc} $\delta$ is a simple arc in $S^2$ such that $\partial \delta = \delta \cap D \cap \alpha \cap \beta$ for a state circle $\alpha \subset s_A$ and another state circle $\beta \subset s_B$. See the figure below for example.  

\centerline{\includegraphics[height=1in]{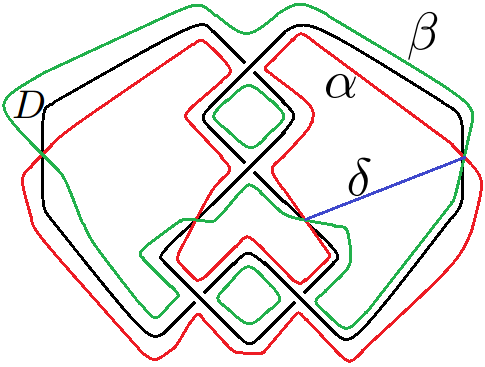}}

A \textit{cutting loop} $\gamma$ of a prime non-alternating diagram $D$ is a simple loop on $F(D)$ satisfying the following properties 
\textup{:}
\begin{enumerate}
\item $\gamma$ is non-separating on $F(D)$,
\item $\gamma$ intersects $D$ twice in $F(D)-\{\text{equatorial discs}\}$,
\item $\gamma$ bounds a disc $U_\gamma$ in one of the handlebodies bounded by $F(D)$ such that $U_\gamma \cap S^2$ is a cutting arc $\delta$.  
The disc $U_\gamma$ is called a \textit{cutting disc} of $D$.
\end{enumerate}

Every cutting loop has a corresponding cutting arc. We will prove the converse in Theorem \ref{surgerytheorem} below.

Let $\tau$ be a simple arc on $S^2 - \{\text{crossings}\}$ such that $\partial \tau = \tau \cap D$. A surgery along $\tau$ is the procedure of constructing a new link diagram $D'$ as follows:
\begin{equation}
D' = (D-(\partial \tau \times [-\epsilon,\epsilon])) \cup (\tau \times \{-\epsilon, \epsilon\}).
\end{equation}

\centerline{\includegraphics[scale=0.25]{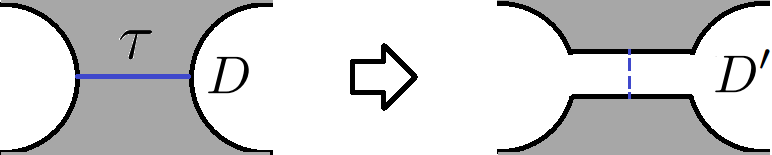}}

Let $\gamma$ be a cutting loop of $D$. A surgery along $\gamma$ is the procedure of constructing a new surface  $F'(D)$ as follows
\textup{:}
\begin{equation}
F'(D) = (F(D)-(\gamma \times [-\epsilon,\epsilon]))  \cup (U_\gamma \times \{-\epsilon, \epsilon\})
\end{equation}
and constructing a new diagram $D'$ both on $F'(D)$ and on $S^2$
\begin{equation}
D' = (D-(\partial \delta \times [-\epsilon,\epsilon])) \cup (\delta \times \{-\epsilon,\epsilon\})
\end{equation}

More generally, a surgery along any simple loop $\gamma$ on $F(D)-\{\text{equatorial discs}\}$ can be defined similarly if $\gamma$ satisfies conditions (2) and (3) in the definition of cutting loops, with $U_\gamma \cap S^2 = \tau$, where $\tau$ is a simple arc as above. See the figure below for example.

\centerline{\includegraphics[scale=0.2]{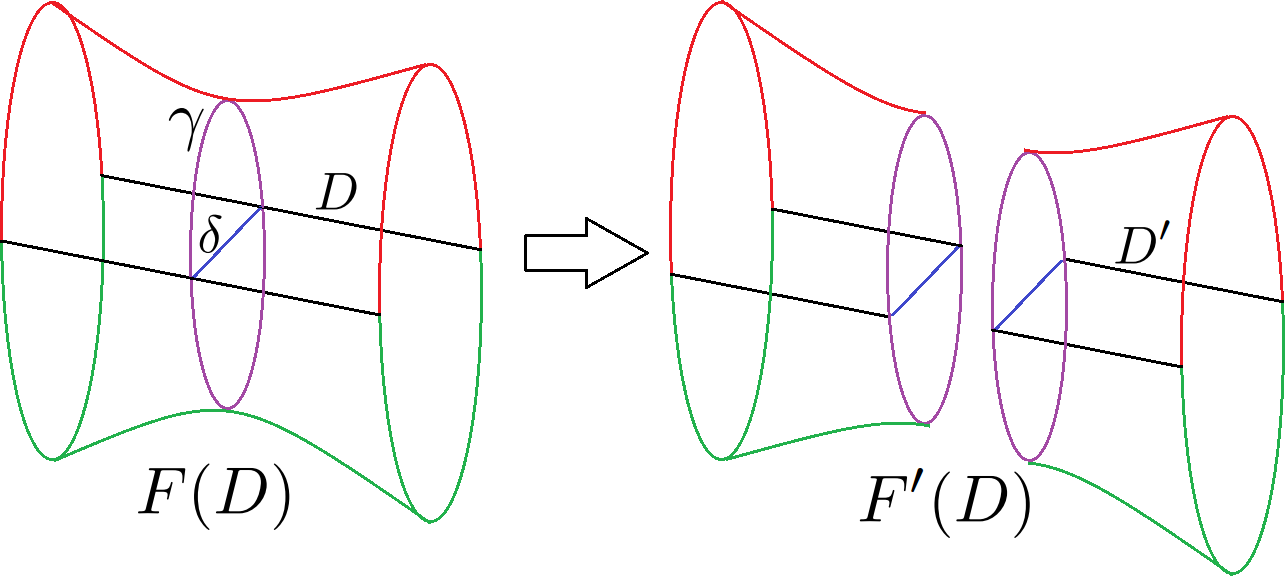}}

\section{Classification of Turaev genus one diagrams}
\begin{theorem}\label{surgerytheorem}
If $D$ is a prime non-alternating diagram then there exists a cutting arc $\delta$. Moreover, every cutting arc $\delta$ determines a corresponding cutting loop $\gamma$ on $F(D)$. After surgery along $\delta$ and $\gamma$, we get $F(D')=F'(D)$ and $g_T(D')=g_T(D)-1$.
\end{theorem}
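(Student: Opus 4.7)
The plan is to prove the theorem in three stages — existence of a cutting arc, extraction of a cutting loop from it, and the effect of the two surgeries — using the identification $F(D)\cap S^2 = D$ to transfer between compression discs of the handlebodies bounded by $F(D)$ in $S^3$ and arcs on $S^2$.

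\textbf{Existence of $\delta$.} Since $D$ is non-alternating, $F(D)$ has positive genus, hence admits a compression disc $U$ in one of the two handlebodies it bounds, with boundary $\gamma_0$ a non-separating simple closed curve on $F(D)$. I would put $U$ in general position with $S^2$ and minimize $|U\cap S^2|$; the intersection then consists of circles in the interior of $U$ together with arcs whose endpoints lie on $\gamma_0\cap D$. An innermost-disc argument eliminates the circles: each such circle bounds a disc $\Delta\subset S^2$, primeness of $D$ forces $\Delta$ to contain no crossings, and isotoping $U$ across $\Delta$ reduces intersections. Once the circles are gone, an outermost-arc reduction on $U$ yields a smaller compression disc whose boundary meets $D$ in exactly two points; the outermost arc is the candidate $\delta$. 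A local analysis at the endpoints of $\delta$ — comparing how $F(D)$ and $S^2$ fit together at alternating versus non-alternating edges — then shows those endpoints are midpoints of non-alternating edges sharing a common pair $(\alpha,\beta)\in s_A\times s_B$, so $\delta$ is a cutting arc.

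\textbf{From $\delta$ to $\gamma$, and surgery.} Given any cutting arc $\delta$ with endpoints $p,q\in\alpha\cap\beta$, I would construct $U_\gamma$ by pushing $\delta$ slightly into one handlebody $H$ and capping off at $p,q$ using the local product structure of $F(D)$ at non-alternating edge midpoints; this yields a disc with $U_\gamma\cap S^2=\delta$ and $\partial U_\gamma=\gamma\subset F(D)$ meeting $D$ only at $\{p,q\}$. Non-separation of $\gamma$ is inherited from the outermost-arc setup in step 1. Performing the two surgeries then cuts $F(D)$ along a non-separating curve, reducing genus by one, while the band-surgery on $D$ along $\delta$ splits each of $\alpha,\beta$ into two state circles, giving $|s_A(D')|=|s_A(D)|+1$, $|s_B(D')|=|s_B(D)|+1$, and $c(D')=c(D)$. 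The Turaev genus formula $g_T=(c+2-|s_A|-|s_B|)/2$ then produces $g_T(D')=g_T(D)-1$. Finally, one checks that the cell decomposition on $F'(D)$ inherited from the surgery realizes $D'$ as an alternating diagram with the correct max and min discs, so uniqueness of the Turaev surface from its cell data forces $F'(D)=F(D')$.

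The principal obstacle will be step 1: the combined innermost/outermost minimization under primeness, together with the local verification that $\partial\delta$ lands on midpoints of non-alternating edges sharing a common state-circle pair. Primeness of $D$ is the essential geometric hypothesis here — it is what allows the innermost circles of $U\cap S^2$ to be pushed across $S^2$, a step that would fail in the composite case, where the innermost disc could contain crossings and thereby block the isotopy.
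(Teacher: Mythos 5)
The main gap is in your step 1, and it is compounded by a misidentification of where primeness enters. An outermost arc of $U\cap S^2$ cuts off a subdisc of $U$ lying entirely on one side of $S^2$; since the part of $F(D)$ strictly above $S^2$ is a disjoint union of discs, one for each circle of $s_A$ (annulus plus capping disc), this shows at best that the two endpoints of your arc lie on a common $A$-circle (or, if the subdisc lies below $S^2$, on a common $B$-circle) --- not on a common pair $(\alpha,\beta)$. To get both you would need $U\cap S^2$ reduced to a single arc, a strictly stronger statement requiring its own boundary-compression/induction argument (note also that the outermost subdisc is not itself a compression disc, since part of its boundary lies on $S^2$ rather than on $F(D)$). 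Even granting that, nothing forces the endpoints to be midpoints of non-alternating edges: $\partial U$ can cross $D$ transversally in the interior of an alternating edge, or meet $S^2$ inside the equatorial (saddle) discs, which belong to $F(D)\cap S^2$ and which you never address; and the common-$(\alpha,\beta)$ condition is global, so no ``local analysis at the endpoints'' can deliver it. Moreover, primeness is not what removes the circles of $U\cap S^2$: such a circle lies in $\mathrm{int}(U)$, hence misses $F(D)\supset D$ entirely, lies in a single face (an open disc, because $D$ is connected), and bounds a crossing-free disc there; that step works verbatim for composite diagrams, contrary to your closing remark. In the paper primeness is used elsewhere: the cutting arc is found two-dimensionally, as the arc joining the two vertices of an outermost bigon cut out by a state circle $\alpha\subset s_A$ and an arc of $s_B$, and primeness is exactly what forces that bigon to contain no crossings (otherwise its boundary would be a composite circle); later it is what makes $\gamma$ non-separating.

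Two further points. The ``moreover'' clause concerns every cutting arc, so non-separation of $\gamma$ must be proved for the loop attached to an arbitrary $\delta$; your appeal to inheritance from the outermost-arc setup addresses at most the single arc you constructed, and even there the modifications of $U$ can a priori produce inessential or separating boundary. The paper argues instead: if $\gamma$ separated, then $F'(D)=F(D')$ would be disconnected, hence $D'$ and then $D$ would be disconnected by the arc surgery, contradicting primeness. Likewise, ``push $\delta$ into one handlebody and cap off at $p,q$'' skips what actually needs proof in constructing $U_\gamma$: the paper joins $p,q$ by arcs $\delta_A$, $\delta_B$ in the two state discs and uses the decomposition of $S^3$ into balls by $S^2$, the crossing balls and the state discs (Lemma 3.1 of \cite{ADK}) to show that $\delta\cup\delta_A$ and $\delta\cup\delta_B$ bound embedded discs in the same handlebody, so that $\gamma=\delta_A\cup\delta_B\subset F(D)$ meets $D$ only at $\partial\delta$, avoids the equatorial discs, and bounds the required cutting disc. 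Your genus count ($c$ unchanged, $|s_A|$ and $|s_B|$ each increasing by one) and the identification $F'(D)=F(D')$ are fine in outline and close to the paper's cobordism argument, but they rest on the unproved claim that your arc is a genuine cutting arc.
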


\begin{proof}

First, we show the existence of a cutting arc. Consider a state circle $\alpha \subset s_A$ such that $\alpha \cap s_B \neq \emptyset$. Take the outermost bigon in the disc bounded by $\alpha$ which is formed by $\alpha$ and $s_B$. Near this bigon, we have two possible configurations of $D$, $\alpha$ and $\beta \subset s_B$ as in the figure below. If this bigon contains a part of $D$ as in the figure below (right), then there exists at least one crossing for each side of the bigon. Then the boundary of this bigon is a composite circle, so it contradicts our assumption that $D$ is prime. Therefore, the configuration should be as in the figure below (left), so we can take a cutting arc $\delta$ by connecting the two vertices of the bigon as in the figure below (left).

\centerline{\includegraphics[height=0.3in]{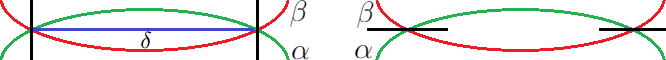}}

Next, we prove that each cutting arc $\delta$ has a corresponding simple loop $\gamma$ on $F(D)$ which satisfies conditions ($2$) and ($3$) of the definition of cutting loops. By definition, two endpoints of $\delta$ lie on $\alpha$, $\alpha \subset s_A$. Connect the two endpoints with an arc $\delta_A$ on the state disk $\overline{\alpha}$ bounded by $\alpha$. As in the proof of Lemma 3.1 in \cite{ADK}, $S^2$, crossing balls, and state disks cut $S^3$ into disjoint balls. Among those balls, we can find a ball whose boundary consists of $\overline{\alpha}$ and a face of $D$ containing $\delta$ inside. Therefore, $\delta \cup \delta_A$ bounds a disc inside that ball. By construction, each ball is contained in one of the handlebodies bounded by $F(D)$, and so does the disc bounded by $\delta \cup \delta_A$. By the same argument, we can find another arc $\delta_B$ in the state disk bounded by $\beta \subset s_B$, and a loop $\delta \cup \delta_B$ which bounds a disk in the same handlebody as $\delta \cup \delta_A$. Then $\gamma = \delta_A \cup \delta_B$ is a simple loop on $F(D)$ which satisfies the conditions ($2$) and ($3$) of the definition of cutting loops.  

Now, we show that $F'(D)=F(D')$. Surgery along $\delta$ divides each state circle into two pieces, and each of them is a state circle of $D'$ because the choice of smoothing did not change. By definition, surgery along $\gamma$ changes $D$ into $D'$. So if we consider a copy of the cobordism between $s_A$ and $s_B$ in $F(D)$, surgery along $\gamma$ changes this cobordism into a cobordism between state circles of $D'$. Moreover, surgery along $\gamma$ divides $\overline{\alpha}$ and $\overline{\beta}$ into two disks each, so each boundary component of the new cobordism is closed up with a disk. Therefore, $F(D')$ is equal to $F'(D)$. See the last figure of Section \ref{definitions}, which describes the cutting loop surgery.

Lastly, we prove that condition ($1$) of the definition of cutting loops holds. If $\gamma$ is separating, then $F'(D)$ is disconnected, which implies that $D'$ is disconnected since $F'(D)=F(D')$. Therefore, surgery along $\delta$ disconnects $D$, which implies that $D$ is not prime. This is a contradiction, so $\gamma$ is non-separating, hence essential. By this non-separating property, $g_T(D')=g_T(D)-1$ is obvious.\end{proof}

\begin{lemma}\label{primelemma}
Any two faces of a prime diagram $D$ can share at most one edge. 
\end{lemma}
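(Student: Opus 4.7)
The plan is proof by contradiction. Assume that two distinct faces $F_1$ and $F_2$ of $D$ share two distinct edges $e_1$ and $e_2$. I will construct a simple loop $\gamma$ on $S^2-\{\text{crossings}\}$ that meets $D$ in exactly two points and has crossings on both sides, so that $\gamma$ is a composite circle, contradicting the primeness of $D$.

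For the construction, let $p_i$ be the midpoint of $e_i$ for $i=1,2$. Since $D$ is connected on $S^2$, each face is an open disc, so I can choose a simple arc $\alpha_j$ from $p_1$ to $p_2$ whose interior lies in the open face $F_j$, for $j=1,2$. These arcs have disjoint interiors (because $F_1\cap F_2=\emptyset$) and share only the endpoints, so $\gamma:=\alpha_1\cup\alpha_2$ is a simple closed curve meeting $D$ precisely at $p_1,p_2$, neither of which is a crossing of $D$. By the Jordan curve theorem, $\gamma$ bounds two open discs $U,V$ in $S^2$.

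The main step is to show that each of $U$ and $V$ contains at least one crossing of $D$. Let $c_1,c_2$ be the two endpoints of $e_1$. The loop $\gamma$ meets $e_1$ transversally at the single interior point $p_1$, so locally near $p_1$ the two half-edges from $p_1$ to $c_1$ and from $p_1$ to $c_2$ lie on opposite sides of $\gamma$. Each such half-edge is a connected subset of $D$ whose interior is disjoint from $\gamma$, because $\gamma\cap D=\{p_1,p_2\}$ and $p_2\in e_2\neq e_1$. Hence, after relabeling, the half-edge from $p_1$ to $c_1$ lies in $\overline{U}$ and the one from $p_1$ to $c_2$ lies in $\overline{V}$. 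Since neither endpoint $c_i$ lies on $\gamma$, this forces $c_1\in U$ and $c_2\in V$; in particular $c_1\neq c_2$, and there is a crossing of $D$ in each of $U$ and $V$. Thus $\gamma$ is a composite circle for $D$, contradicting primeness.

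The only thing that needs care is the choice of the arcs $\alpha_1,\alpha_2$, but this is routine: $\overline{F_j}$ is a closed disc with $p_1,p_2\in\partial F_j$, so a chord through its interior suffices, and the two chords automatically have disjoint interiors. The crucial geometric input is only the transverse intersection of $\gamma$ with $e_1$ at $p_1$, which forces the endpoints $c_1,c_2$ onto opposite sides of $\gamma$.
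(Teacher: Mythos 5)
Your proof is correct and is exactly the argument the paper leaves implicit: the paper's proof is the one-line remark that ``two edges determine a composite circle,'' and you have simply supplied the construction of that circle through the midpoints of the two shared edges and the transversality argument showing a crossing lies on each side. (One small imprecision: $\overline{F_j}$ need not be an embedded closed disc, but all you need is a simple arc in the open face from $p_1$ to $p_2$, which exists since the face is an open disc and the $p_i$ are accessible boundary points, so the argument stands.)
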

\begin{proof}
Two edges determine a  
composite circle, contradicting that $D$ is prime. 
\end{proof}

\begin{proof}[Proof of Theorem \ref{Turaev1}]

\begin{claim}\label{essentialclaim}
A boundary of every face of $D \in S^2$ which contains a non-alternating edge is an essential loop of $F(D)$.
\end{claim}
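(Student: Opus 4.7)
The plan is to argue by contradiction. Suppose $\partial f$ is inessential on $F(D)$, so it bounds an embedded disc $D_F\subset F(D)$. Since the $2$-cells of $F(D)$ are the state discs (property (iv)) and $\partial D_F=\partial f$ lies in the $1$-skeleton $D$, the disc $D_F$ must be a union of state discs together with their shared edges and crossings. In particular, for each edge $e'$ of $\partial f$, the disc $D_F$ lies on one specific side of $e'$ on $F(D)$, and the state disc on that side (one of the two state discs---one from $s_A$, one from $s_B$---adjacent to $e'$ on $F(D)$) is entirely contained in $D_F$.

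To derive a contradiction, I will exhibit a cutting loop $\gamma\subset F(D)$ that intersects $\partial f$ transversely in exactly one point. Since cutting loops are non-separating by condition (1) of the definition, such a $\gamma$ would force $\gamma\cdot\partial f=\pm 1$ in $H_1(F(D))$, contradicting the triviality of $\partial f$ that is forced by the existence of $D_F$.

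To construct $\gamma$, let $e$ be the non-alternating edge of $\partial f$ with midpoint $m$, let $\alpha\in s_A$ and $\beta\in s_B$ be the state circles through $m$, and let $f'$ denote the face of $D$ on the opposite side of $e$ from $f$. Applying the outermost-bigon argument from the existence proof in Theorem \ref{surgerytheorem}, but restricted to the portion of the disc bounded by $\alpha$ that meets $f'$, I will obtain a cutting arc $\delta\subset f'$ with one endpoint at $m$ and the other at some midpoint $m'$ of a non-alternating edge $e'\subset\partial f'$. By Lemma \ref{primelemma}, the prime diagram $D$ has $f$ and $f'$ sharing only the edge $e$, so $e'\neq e$ forces $m'\notin\partial f$. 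The induced cutting loop $\gamma=\delta_A\cup\delta_B$ meets $D$ transversely only at $m$ and $m'$, so $\gamma\cap\partial f=\{m\}$, as required.

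The main obstacle I anticipate is verifying that the outermost-bigon construction can be performed specifically on the $f'$-side with $m$ as a vertex. A priori, all such bigons near $m$ might lie on the $f$-side instead. To address this I expect to either swap the roles of $\alpha$ and $\beta$ (using bigons of $\beta\cap s_A$ on the $f'$-side), or, should that also fail, invoke the fact that $\partial f$ must contain an even number of non-alternating edges and run the argument starting from a different non-alternating edge of $\partial f$.
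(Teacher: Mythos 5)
Your intersection-number strategy (exhibit a closed curve on $F(D)$ meeting $\partial f$ transversally in exactly one point, so $\partial f$ cannot bound) is sound, but the dual curve you propose need not exist, and this is a genuine gap rather than a technicality. Your construction requires a cutting arc with one endpoint at the prescribed midpoint $m$ and the other endpoint off $\partial f$. The outermost-bigon argument of Theorem \ref{surgerytheorem} produces \emph{some} empty bigon somewhere in the disc bounded by $\alpha$, with no control over where its vertices lie, so it cannot be ``restricted'' so as to force $m$ to be a vertex, and neither swapping $\alpha$ with $\beta$ nor starting from the other non-alternating edge of $\partial f$ repairs this. Worse, the curve you want can simply fail to exist: let $D$ be a cycle of $n>2$ alternating $2$-tangles with $g_T(D)=1$ (the configuration pictured before Lemma \ref{intersectionnumber}), and let $f$ be one of the black square faces of the channel region, whose boundary contains exactly two non-alternating edges. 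A cutting loop meets $D$ exactly twice, while the argument of Lemma \ref{intersectionnumber} shows that any essential simple loop on $F(D)$ not isotopic to a meridian meets $D$ at least $n>2$ times; hence every cutting loop of this $D$ is a meridian. The meridian through $f$ meets $\partial f$ in two points (the midpoints of its two non-alternating edges), and every other cutting loop is disjoint from $\partial f$. So no cutting loop meets $\partial f$ an odd number of times, your parity argument yields no contradiction, and yet $\partial f$ is essential: for exactly those faces in which the cutting arcs live, cutting loops are the wrong dual curves.

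The paper keeps your parity idea but chooses a different dual curve: the boundary of the face $f'$ on the other side of the non-alternating edge $e$. By Lemma \ref{primelemma}, $e$ is the only edge shared by $f$ and $f'$, and, using the isotopy from the proof of Theorem 3.4 of \cite{ADK} that pushes face boundaries on $F(D)$ so they meet only at midpoints of non-alternating edges, $\partial f$ and $\partial f'$ can be made to intersect transversally in the single point $m$. Two simple closed curves on a closed orientable surface meeting transversally in one point have nonzero mod~$2$ intersection number, so both are essential; no cutting loop is needed. If you substitute $\partial f'$ for your $\gamma$, the rest of your argument goes through. (Two minor points: the non-separating property of $\gamma$ was never what forced the intersection number to be $\pm 1$ --- the single transverse intersection point does that on its own --- and the opening paragraph about the disc being a union of state discs is unnecessary.)
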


Note that from the proof of Theorem 3.4 of \cite{ADK}, the boundary of every face can be isotped along $F(D)$ to intersect any other boundary transversally at the midpoints of non-alternating edges of $D$. See figure below:

\centerline{\includegraphics[height=1.15in]{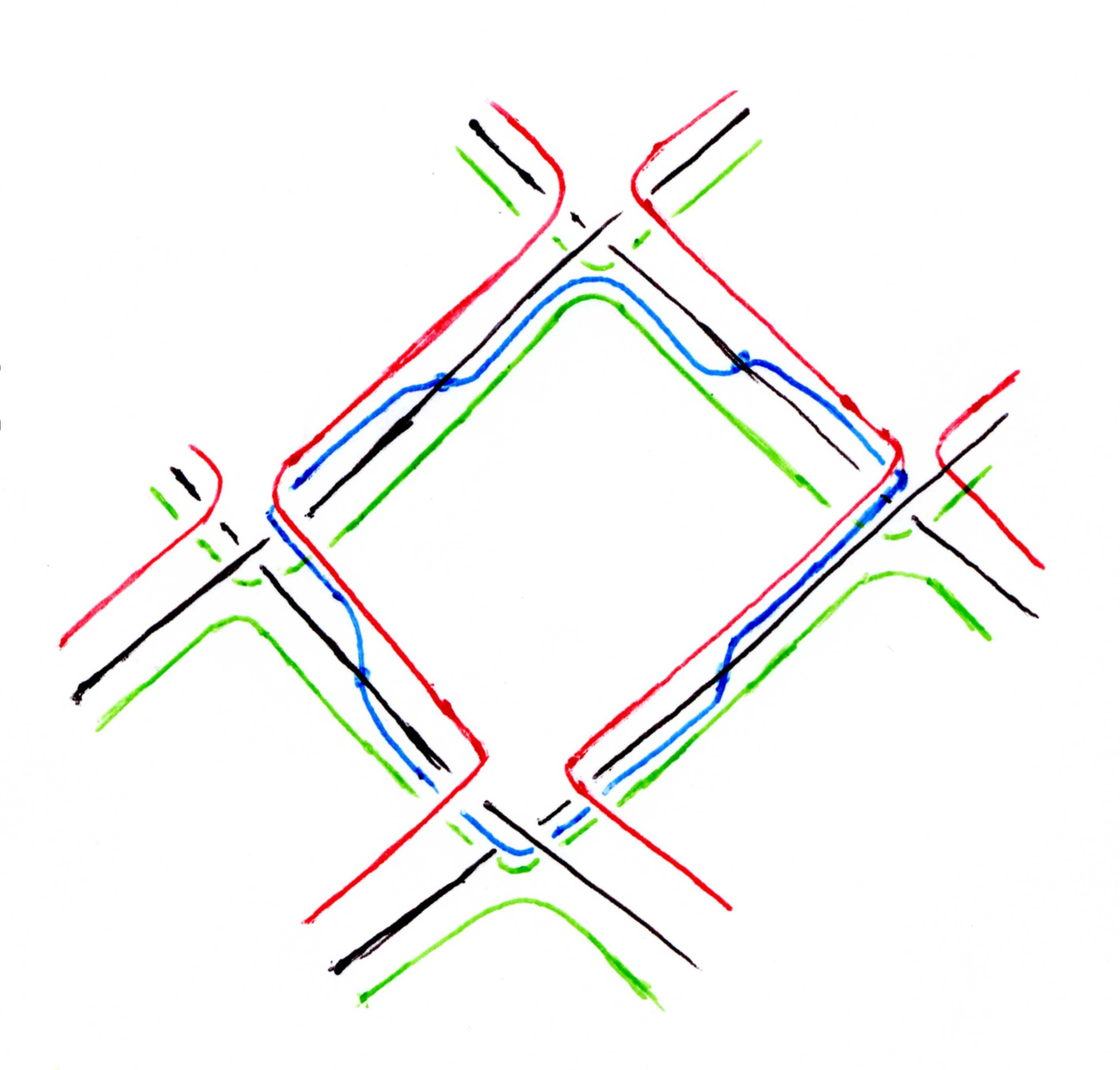}}
Consider a pair of faces which share a non-alternating edge. By Lemma \ref{primelemma}, this is the only edge shared by those two faces. The boundaries of these two faces can be isotoped to intersect only at the midpoint of such a non-alternating edge. Hence, these curves are essential on $F(D)$.

Let $\delta$ be a cutting arc of $D$. Assume that $\delta$ is in a black face $B$ of $D$, and the corresponding cutting loop $\gamma$ is a meridian of $F(D)$. 

\begin{claim}\label{twowhite}
Two white faces of $D$ have non-alternating edges of $D$ on their boundaries.
\end{claim}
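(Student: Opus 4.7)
The plan is to locate two non-alternating edges on the boundary of the black face $B$ coming directly from the cutting arc $\delta$, then use primeness (via Lemma \ref{primelemma}) to conclude that the white faces on the opposite sides of these edges are distinct.

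First I would unpack the definition of a cutting arc. Since $\partial \delta = \delta \cap D \cap \alpha \cap \beta$ with $\alpha \subset s_A$ and $\beta \subset s_B$, the two endpoints of $\delta$ are exactly midpoints of edges of $D$ where an $A$-state circle meets a $B$-state circle on $D$, i.e.\ midpoints of two non-alternating edges. Because midpoints of distinct edges are distinct, these two non-alternating edges $e_1, e_2$ are themselves distinct. Since the entire arc $\delta$ lies in the black face $B$, both $e_1$ and $e_2$ appear on $\partial B$.

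Next I would invoke checkerboard coloring: the face of $D$ lying across $e_i$ from $B$ is a white face $W_i$ (for $i=1,2$). If $W_1 = W_2$, then this white face shares at least two distinct edges, namely $e_1$ and $e_2$, with the black face $B$. By Lemma \ref{primelemma}, this contradicts primeness of $D$. Hence $W_1 \neq W_2$, and these are two distinct white faces whose boundaries contain non-alternating edges of $D$, which is exactly the claim.

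I do not anticipate any real obstacle here; the whole argument is a one-line consequence of the definition of cutting arc together with Lemma \ref{primelemma}. The only place to be mildly careful is in making sure the two endpoints of $\delta$ really do give two different edges of $D$, but this is immediate from the fact that each edge has a unique midpoint.
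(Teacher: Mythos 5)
There is a genuine gap: you have proved the wrong (and essentially trivial) reading of the claim. As it is used in the proof of Theorem \ref{Turaev1}, the claim means that \emph{only} two white faces of $D$ carry non-alternating edges — every non-alternating edge of $D$ lies on the boundary of one of the two faces $W$, $W'$ that meet the cutting loop $\gamma$. Your argument only produces \emph{at least} two such white faces (the two across the edges at $\partial\delta$ from the black face $B$), which is an immediate consequence of the definition of a cutting arc and Lemma \ref{primelemma}. Tellingly, your argument never uses the hypothesis $g_T(D)=1$, whereas the intended statement is special to genus one: for higher Turaev genus there are in general more than two white faces meeting non-alternating edges. Without the "only two" statement, the next step of the paper's proof — that all arcs joining adjacent midpoints of non-alternating edges are parallel to $\delta$ in $S^2-(W\cup W')$ and hence cut $D$ into alternating $2$-tangles — does not follow.

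The missing idea is topological and goes through Claim \ref{essentialclaim} and the assumption that $\gamma$ is a meridian of the torus $F(D)$. Any face containing a non-alternating edge has essential boundary on $F(D)$; since all white faces lie in the same handlebody, the boundary of any white face is either trivial or a longitude; every longitude must intersect the meridian $\gamma$; but $\gamma$ meets $D$ in exactly two points, and those points lie on $\partial W$ and $\partial W'$. Hence no white face other than $W$ and $W'$ can have a non-alternating edge on its boundary. That is the content your proposal omits.
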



By Claim \ref{essentialclaim}, a boundary of every face which contains non-alternating edges is either meridian or longitude. There are only two white faces $W$ and $W'$ which each intersects $\gamma$ once on its boundary. This implies that $\partial W$ and $\partial W'$ are longitudes. Any two faces with the same color are contained in the same handlebody bounded by $F(D)$, so a boundary of every white face is either longitude or trivial on $F(D)$. Every longitude intersect a meridian, so these are the only two white faces which contain non-alternating edges on their boundaries.

\centerline{\includegraphics[scale=0.4]{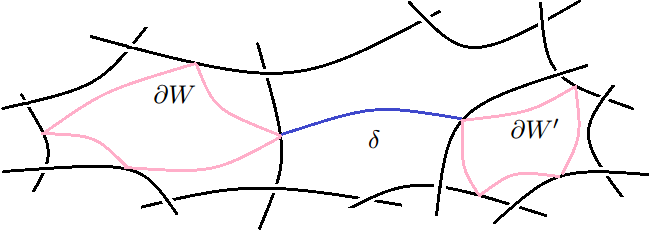}}

Connect every pair of adjacent midpoints of non-alternating edges with a simple arc entirely in a black face. Then all such arcs are parallel to $\delta$ in $S^2 - (W \cup W')$, so they cut $D$ into $2$-tangles (see figure below).
Furthermore, each $2$-tangle is alternating because all edges of the $2$-tangle other than the four half edges are alternating. Hence, $D$ is a cycle of alternating $2$-tangles.
This completes the proof of Theorem \ref{Turaev1}.\end{proof}

\centerline{\includegraphics[scale=0.4]{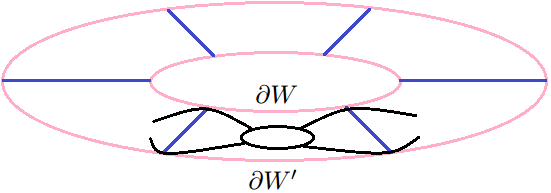}}

\begin{cor}\label{NAprime}
If $D$ is a prime non-alternating diagram on $S^2$, then $g_T (D) > 0$.
\end {cor}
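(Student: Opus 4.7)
The plan is to read this off Theorem \ref{surgerytheorem} almost immediately. Since $D$ is prime and non-alternating, that theorem guarantees two things: first, that a cutting arc $\delta$ exists, and second, that $\delta$ determines a corresponding cutting loop $\gamma$ on the Turaev surface $F(D)$.

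The key observation is then purely topological. By condition (1) of the definition of a cutting loop (or equivalently, by the last paragraph of the proof of Theorem \ref{surgerytheorem}), the loop $\gamma$ is non-separating on $F(D)$. Since $S^2$ admits no non-separating simple closed curves, any closed orientable surface carrying a non-separating simple closed curve must have positive genus. Hence
\begin{equation*}
g_T(D) \;=\; g(F(D)) \;\geq\; 1,
\end{equation*}
which is the desired inequality. Equivalently, one can cite the last sentence of Theorem \ref{surgerytheorem} directly: after surgery we obtain a diagram $D'$ with $g_T(D')=g_T(D)-1$, and since $g_T(D')\geq 0$ by definition, $g_T(D)\geq 1$.

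There is no real obstacle here; the content is entirely absorbed into Theorem \ref{surgerytheorem}, so the proof is a one- or two-line invocation of the existence of a cutting arc on a prime non-alternating diagram together with the non-separating property of its associated cutting loop.
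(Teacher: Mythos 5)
Your proposal is correct and matches the paper's intended argument: the corollary is stated as an immediate consequence of Theorem \ref{surgerytheorem}, and your derivation (a prime non-alternating diagram admits a cutting loop, which is non-separating on $F(D)$, forcing $g(F(D))\geq 1$; equivalently $g_T(D)=g_T(D')+1\geq 1$) is exactly that reading.
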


\begin{cor}\label{obstruction}
Let $D$ be a cellularly embedded alternating link diagram on a Heegaard surface $F$ of $S^3$ with $g(F) \geq 1$. If $F$ is the Turaev surface of $L$ which is represented by $D$, then there exists an essential simple loop on $F$ which intersects $D$ twice and bounds a disc in one of the handlebodies bounded by $F$ .
\end{cor}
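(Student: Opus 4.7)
The plan is to exhibit the desired loop as a cutting loop produced by Theorem \ref{surgerytheorem}. By hypothesis, there is a planar diagram $D_0$ of $L$ on $S^2$ with $F(D_0) = F$, and $D$ on $F$ is the alternating diagram obtained by lifting $D_0$ via the Turaev surface construction. Since $g(F) = g(F(D_0)) \geq 1$, property \textup{(iii)} of the Turaev surface listed in Section 2 rules out $D_0$ being alternating on $S^2$, so $D_0$ is a non-alternating diagram of $L$.

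Assume first that $D_0$ is prime. Then Theorem \ref{surgerytheorem} directly produces a cutting arc $\delta \subset S^2$ together with a corresponding cutting loop $\gamma$ on $F(D_0) = F$. By the definition of a cutting loop given in Section \ref{definitions}, $\gamma$ is non-separating on $F$ (and hence essential), meets $D$ transversely in exactly two points away from the equatorial discs, and bounds a disc $U_\gamma$ lying in one of the two handlebodies bounded by $F$. This $\gamma$ is the required essential simple loop.

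If $D_0$ is composite, I would reduce to the prime case. A composite circle of $D_0$ writes $D_0 = D_1 \# D_2$, and the Turaev surface construction is compatible with this connected sum decomposition, so $F$ is realized as a connected sum of the Turaev surfaces $F(D_1)$ and $F(D_2)$. Since $g(F) \geq 1$, at least one summand must itself be non-alternating; iterating, I reduce to a prime non-alternating summand $D_0'$. Applying Theorem \ref{surgerytheorem} to $D_0'$ yields a cutting loop $\gamma$ on $F(D_0')$ which persists as a simple loop in $F$, still intersects $D$ twice, and still bounds a disc in one of the handlebodies of $S^3 \setminus F$. Because $\gamma$ was non-separating in the summand and the connected sum is performed away from $\gamma$, it remains non-separating, hence essential, in $F$.

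The main obstacle is the composite case: one needs to verify that the Turaev surface construction respects the connected sum decomposition of $D_0$ and that a cutting loop constructed inside a prime summand survives as an essential, non-separating loop on the enlarged surface $F$ with its disc-bounding property preserved. Once this reduction is in place, the statement of the corollary is a direct translation of Theorem \ref{surgerytheorem} into the intrinsic language of $F$.
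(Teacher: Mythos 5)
Your proposal is correct and takes essentially the same approach as the paper: the corollary is an immediate consequence of Theorem \ref{surgerytheorem}, since $g(F)\geq 1$ forces the planar diagram to be non-alternating and the resulting cutting loop is by definition non-separating (hence essential), meets $D$ twice, and bounds a disc in one of the handlebodies. Your extra reduction for the composite case, via compatibility of the Turaev surface with connected sum, supplements a point the paper leaves implicit (it gives no written proof, and the same additivity under connected sum is invoked later in the proof of Theorem \ref{concentric}), so it is a reasonable completion rather than a departure.
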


For example, the figure below (left) is an alternating link diagram on a torus. There is no simple loop on the torus which intersects the link diagram twice. Hence, by Corollary \ref{obstruction}, this link diagram on the torus cannot come from the Turaev surface algorithm.

\centerline{\includegraphics[height=0.5in]{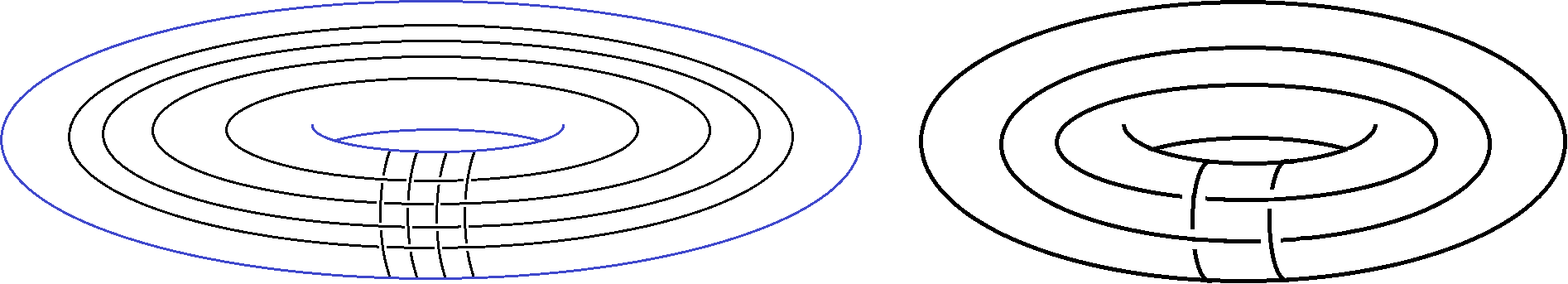}}

In \cite{H1}, Hayashi defined the following complexity of a cellularly embedded, reduced alternating diagram on a closed orientable surface with positive genus. 
\begin{definition}\label{Hayashicomplexity}(\cite{H1})
The complexity $a(F,D)$ of a cellularly embedded, reduced alternating link diagram $D$ on $F$ is defined by

$a(F,D) = \min \{|l \cap D| ; \text{$l$ is an essential simple loop on $F - $\{crossings\}} \}$
\end{definition}

\begin{cor}\label{complexitycor}
Let $D$ be a connected prime non-alternating link diagram on $S^2$. Then $a(F(D),D)=2$. 
\end{cor}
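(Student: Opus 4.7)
The plan is to prove $a(F(D),D) = 2$ by establishing both inequalities. The upper bound $a(F(D),D) \leq 2$ is essentially already proved: since $D$ is prime and non-alternating, Corollary \ref{NAprime} gives $g_T(D) \geq 1$, so $F(D)$ has positive genus. Properties (ii) and (iv) from Section~2 tell us that $D$ is alternating and cellularly embedded on $F(D)$. Hence all hypotheses of Corollary \ref{obstruction} are satisfied, and it produces an essential simple loop on $F(D)$ meeting $D$ in exactly two points, which after a small isotopy along edges can be assumed to avoid the crossings.

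For the lower bound $a(F(D),D) \geq 2$, I would rule out essential simple loops $l \subset F(D) - \{\text{crossings}\}$ with $|l \cap D| \in \{0,1\}$. If $|l \cap D| = 0$, then $l$ lies in $F(D) - D$, which is a disjoint union of open discs by property (iv); thus $l$ bounds a disc on $F(D)$ and cannot be essential. If $|l \cap D| = 1$, write $l \cap D = \{p\}$; the two local sides of $l$ at $p$ lie in faces of opposite colors in the checkerboard decomposition guaranteed by property (iv). Removing $p$ from $l$ yields a connected arc lying entirely in $F(D) - D$ whose two endpoints approach two different components of $F(D) - D$, which is impossible since $F(D) - D$ is a disjoint union of discs.

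The argument faces no real obstacle; the corollary is a clean repackaging of Corollary \ref{obstruction} combined with the checkerboard structure of the cell decomposition of $F(D)$ induced by $D$. The only mildly subtle point is the last step of the lower bound, where one must invoke that each edge of $D$ is adjacent to faces of opposite colors on $F(D)$, so that the two sides of a single transverse intersection truly lie in distinct components of $F(D) - D$. This is exactly the content of the checkerboard coloring in property (iv), so the proof is short.
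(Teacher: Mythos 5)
Your proof is correct and follows the route the paper intends: the upper bound is exactly the cutting loop of Theorem \ref{surgerytheorem} (packaged as Corollary \ref{obstruction}, applicable since Corollary \ref{NAprime} gives $g(F(D))\geq 1$), and the lower bound is the standard consequence of properties (ii) and (iv) — a loop missing $D$ lies in a disc face, and a loop meeting $D$ once would contradict the checkerboard coloring, since the two sides of an edge lie in faces of opposite colors. The paper leaves these observations implicit, so your write-up is just the expected filling-in of that argument.
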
 

Note that even if we have a cellularly embedded, reduced alternating diagram $D$ on some Heegaard surface $F$ such that $a(F,D) = 2$, it might not be a Turaev surface. For example, the connected diagram in the figure above (right) has four crossings on $F$, but any connected planar diagram of this split link has more than four crossings. Hence, this link diagram on the torus also cannot come from the Turaev surface algorithm.

\section{Inadequate links with Turaev genus one}
\begin{definition}
A crossing $c$ of $D$ is called an \textit{$A$-loop (resp. $B$-loop) crossing} if it corresponds to a loop of an all-$A$ ribbon graph(resp. all-$B$ ribbon graph) of $D$. We say $c$ is a \textit{loop crossing} if it is an $A$-loop or a $B$-loop crossing. If $c$ is both an $A$-loop crossing and a $B$-loop crossing, then $c$ is called an \textit{$AB$-loop crossing}. If there are no loop crossings, then $D$ is called an \textit{adequate diagram}. A diagram with no $A$-loop or no $B$-loop crossings is called a \textit{semi-adequate diagram}. Otherwise, it is called an \textit{inadequate diagram}. A link is \textit{adequate} if it has an adequate diagram. A link is \textit{semi-adequate} if it has a semi-adequate diagram but does not have an adequate diagram. Otherwise, a link is \textit{inadequate}.
\end{definition}

\begin{lemma}
\label{coreloop}
Let $c$ be a loop crossing and $l(c)$ be a corresponding loop of the ribbon graph. Then a core $\mu$ of $l(c)$ bounds a disc $V$ in one of the handlebodies bounded by $F(D)$. Furthermore, we can perturb $V$ to intersect $S^2$ in a simple arc $\nu$ on $S^2$ such that $\nu \cap D = \partial \nu$. 
\end{lemma}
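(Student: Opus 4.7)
The plan is to recognize the core $\mu$ as the boundary of a compressing disc for one of the two handlebodies bounded by $F(D)$, and to construct such a disc explicitly using the ball decomposition of $S^3$ employed in Theorem~\ref{surgerytheorem}. Assume $c$ is an $A$-loop crossing (the $B$-loop case follows by the $A$--$B$ symmetry of the Turaev construction). Let $\alpha \in s_A$ be the state circle containing both $A$-resolution arcs at $c$, and let $\overline{\alpha}$ be its state disk on $F(D)$. I take $\mu$ to be the simple closed curve on $F(D)$ consisting of four arcs in order: an arc $\mu_2$ in the interior of $\overline{\alpha}$; a descending arc on the annulus of $\alpha$ down to $E_c$; an arc $\mu_E$ across $E_c$ on the $A$-side joining the two $\alpha$-attachments of $c$'s ribbon, chosen to cross two strands of $D$ at $c$ transversely; and an ascending arc on the annulus of $\alpha$ back to $\overline{\alpha}$. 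The $A$-loop hypothesis is exactly what guarantees that both vertical arcs terminate on the same state disk $\overline{\alpha}$, so $\mu$ closes up as a simple loop.

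To build $V$, I use Lemma~3.1 of \cite{ADK}: $S^2$ together with the crossing balls and the state disks partitions $S^3$ into a disjoint collection of 3-balls, each contained in a single handlebody of $F(D)$. I identify the 3-ball $B^*$ whose boundary contains the subdisc $\overline{\alpha}_1 \subset \overline{\alpha}$ cut off by $\mu_2$ adjacent to the two ribbon attachments of $c$, a rectangular subregion of the $\alpha$-annulus between $\alpha \cap \partial \overline{\alpha}_1$ and the two $A$-arcs of $\partial E_c$ at $c$, and a subdisc $E_c' \subset E_c$ bounded by $\mu_E$ together with a subarc of $\partial E_c$. These three pieces glue along their shared arcs to form a topological disc $U \subset \partial B^*$ with $\partial U = \mu$. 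Pushing $U$ slightly into $\mathrm{int}(B^*)$ produces the required disc $V$ with $\partial V = \mu$ in the handlebody $H$ containing $B^*$. This construction parallels the one used to produce the cutting disc $U_\gamma$ in Theorem~\ref{surgerytheorem}.

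For the perturbation yielding $\nu$, note that $B^* \cap S^2$ is a disc consisting of a face of $D$ together with $E_c'$. A small isotopy of $V$ transverse to $S^2$, followed by standard innermost-circle and innermost-arc reductions in $B^*$, simplifies $V \cap S^2$ to a single properly embedded arc $\nu$ on $S^2$ lying in that face of $D$. The endpoints of $\nu$ sit on $\partial V$, specifically at the two transverse intersections of $\mu_E$ with the two $D$-strands of $c$ on the $A$-side of $E_c$; these are two points on $D$. A further generic perturbation of $V$'s interior ensures $\mathrm{int}(\nu)$ misses $D$, giving $\nu \cap D = \partial \nu$. The main technical obstacle is verifying that $B^*$ in the ADK decomposition has exactly the described boundary structure: the $A$-loop hypothesis is used crucially, as it is precisely what makes both attaching arcs of $c$'s ribbon terminate on a single state disk, allowing all three boundary pieces to match up inside one ball of the decomposition.
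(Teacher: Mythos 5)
Your overall plan---realize the core $\mu$ explicitly on $F(D)$ and get the compressing disc from the ball decomposition of \cite{ADK}, mirroring Theorem \ref{surgerytheorem}---is the route the paper itself takes, but the central step of your construction fails: the proposed $U=\overline{\alpha}_1\cup(\text{rectangle})\cup E_c'$ is not a disc with $\partial U=\mu$. All three pieces are subsurfaces of $F(D)$ (the cap, the annulus over $\alpha$, and the saddle $E_c$ are exactly the pieces out of which $F(D)$ is built), so their union is a compact subsurface of $F(D)$; it could only be bounded by $\mu$ if $\mu$ were separating on $F(D)$, whereas in the situation this lemma feeds into (Lemma \ref{core-cuttingloop}) $\mu$ is non-separating. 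One also sees the boundary mismatch directly: the feet of the two vertical arcs of $\mu$ lie on the two $A$-arcs of $\partial E_c$, so any rectangle in the $\alpha$-annulus spanning between them has its bottom edge running along $\alpha$ at height zero, i.e.\ along edges of $D$ (and possibly through $A$-arcs of other crossings), while $E_c'$ contributes a subarc of the equator $\partial E_c$; these leftover arcs are not in $\mu$ and do not cancel, so $\partial U\supsetneq\mu$, and pushing $U$ into a handlebody does not yield $V$. A second, related error is that no ball $B^*$ of the decomposition by $S^2$, crossing balls and state disks has pieces of both $\overline{\alpha}$ and $E_c$ on its boundary: $E_c$ lies in $S^2$ inside the crossing ball, so it bounds only the two half-crossing-balls, whose boundaries contain no state disk.

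The repair is to let the disc leave $F(D)$, as in the paper's Theorem \ref{surgerytheorem} argument. Write $\mu=a\cup\mu_E$ with $a=\mu\cap\overline{\alpha}$, where $\overline{\alpha}$ denotes the properly embedded disc formed by the annulus over $\alpha$ together with its cap. Let $R$ be the ball of the decomposition lying on the side of $\overline{\alpha}$ facing the crossing ball at $c$, and let $B_c^{+}$ be the upper half of that crossing ball; they meet along (part of) the upper hemisphere, which is not contained in $F(D)$, so $R\cup B_c^{+}$ is a ball lying in a single handlebody whose boundary contains all of $\overline{\alpha}$ and all of $E_c$. Hence $\mu\subset\partial(R\cup B_c^{+})$ and $\mu$ bounds a disc $V$ inside it; since $(R\cup B_c^{+})\cap S^2$ is a face region of the $A$-state together with $E_c$, your innermost-curve cleanup then does produce the arc $\nu$ with $\nu\cap D=\partial\nu$ (essentially the portion of $\mu_E$ through a $B$-quadrant of $E_c$). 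With that modification the argument coincides with the paper's, which asserts the disc and defers the perturbation to the argument of Theorem \ref{surgerytheorem}; as written, however, the construction of $V$ is a genuine gap.
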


\begin{proof}
 Both the all-$A$ and all-$B$ ribbon graphs are naturally embedded in $F(D)$, so each core loop is a simple loop on $F(D)$. Then it bounds a disc in one of the handlebodies bounded by $F(D)$. Using the same argument as in Theorem \ref{surgerytheorem}, we can show that $V$ can be isotoped to intersect $S^2$ in a simple arc $\nu$.
\end{proof}

\begin{lemma}
\label{intersectionnumber}
Let $D$ be a prime link diagram with $g_T(D)=1$. Let $l$ be a longitude of $F(D)$. If a cutting loop of $D$ is a meridian of $F(D)$, then $\min|l \cap D| =$ $\#$\{maximally connected alternating tangles of $D$\}. 
\end{lemma}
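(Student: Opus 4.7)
The plan is to apply Theorem \ref{Turaev1}, which forces $D$ to be a cycle of $n$ alternating $2$-tangles $\Delta_1,\dots,\Delta_n$, where $n$ equals the number of maximally connected alternating tangles of $D$. The proof of that theorem also produces $n$ mutually parallel cutting arcs $\delta_1,\dots,\delta_n$, one across each ribbon between consecutive tangles, whose associated cutting loops $\gamma_1,\dots,\gamma_n$ all lie in the meridian class of $F(D)$ (under our hypothesis that the distinguished cutting loop is a meridian), and shows that the boundaries $\partial W,\partial W'$ of the two distinguished white faces are longitudes of $F(D)$.

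For the upper bound $\min|l\cap D|\le n$, I would take $l=\partial W$ and isotope it along $F(D)$ exactly as described in the proof of Claim \ref{essentialclaim}. The result is an essential simple loop on $F(D)\setminus\{\text{crossings}\}$ meeting $D$ transversely precisely at the midpoints of the non-alternating edges of $\partial W$. Since each of the $n$ ribbons contributes exactly one non-alternating edge to $\partial W$ (its companion arc lying on $\partial W'$), this produces $|l\cap D|=n$.

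For the lower bound $\min|l\cap D|\ge n$, let $l$ be any essential simple loop in $F(D)\setminus\{\text{crossings}\}$ representing a longitude class. I would first isotope $\gamma_1,\dots,\gamma_n$ to be pairwise disjoint (they are mutually isotopic meridians on the torus $F(D)$); they decompose $F(D)$ into $n$ annuli $A_1,\dots,A_n$. Placing $l$ in minimal position with $\bigcup_i\gamma_i$, the algebraic intersection of $l$ with each $\gamma_i$ is $\pm 1$, so $|l\cap\gamma_i|=1$ and $l$ is cut into $n$ properly embedded arcs, one per annulus. Any such arc that avoids $D$ must lie in a single piece of a state disk; splicing such arcs across the $\gamma_i$ yields a closed loop on $F(D)$ that can change color (between $s_A$ and $s_B$ state disks) only at midpoints of non-alternating edges, each such color change being exactly one $D$-intersection. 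For the spliced loop to realize the longitude class, rather than being null-homologous or in the meridian class, it must execute one full cyclic turn through all $n$ ribbons of the cycle structure, forcing at least one $D$-intersection contributing to each of the $n$ annuli.

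The main obstacle is formalizing this last step, namely that in the dual bipartite cell complex of $F(D)$ (vertices being the $s_A$ and $s_B$ state disks, edges being the $1$-cells of $D$), the shortest closed walk representing a longitude has length exactly $n$. The key geometric input is that the $n$ cutting arcs $\delta_1,\dots,\delta_n$ cyclically partition the channel region between $W$ and $W'$; any longitude-class loop on $F(D)$ must wind through all $n$ corresponding pieces once, and each winding forces a crossing with $D$ via a color switch at a midpoint or an honest transverse intersection in the interior of an annulus.
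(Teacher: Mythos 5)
Your upper bound is fine and is essentially the paper's: the paper draws $D$ in standard position on the torus $F(D)$ (tangles in a row, cutting loops as vertical meridians) and takes a horizontal longitude meeting $D$ exactly once per tangle/ribbon, which is isotopic to your pushed-off $\partial W$ meeting $D$ at the $n$ midpoints of the non-alternating edges on $\partial W$. The problem is the lower bound $\min|l\cap D|\ge n$, which is the actual content of the lemma, and which you yourself flag as ``the main obstacle.'' As written, the step fails at exactly the point you lean on it: it is \emph{not} true that an essential arc crossing the annulus $A_i$ between consecutive cutting loops must meet $D$. In $A_i$ the tangle $T_i$ sits in a disc with two legs running to each boundary meridian; the complement $A_i- D$ has, besides the two channel-face pieces pinched off by the pairs of legs, a component that wraps around $A_i$ in the meridian direction (passing ``over'' and ``under'' the tangle disc) and touches \emph{both} boundary meridians, so it contains an essential arc of $A_i$ disjoint from $D$. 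Hence ``each tangle has a crossing, so each annulus forces an intersection'' does not hold arcwise; what forces the total count $n$ is how these complementary components (i.e.\ the $s_A$- and $s_B$-discs) are glued to one another along the meridians, and that global bookkeeping --- your ``shortest closed walk in the dual bipartite complex representing a longitude has length $n$'' --- is precisely what you have not proved. The paper obtains it by exhibiting the diagram on $F(D)$ in an explicit standard form coming from the cycle-of-$2$-tangles structure and reading the minimum off that picture; your argument never reconstructs that global picture, so the inequality $\ge n$ (rather than just $\ge 2$, which is all that parity of color changes gives) is missing.

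A secondary, fixable issue: you first want $l$ to minimize $|l\cap D|$ and then put it ``in minimal position'' with $\bigcup_i\gamma_i$; these are two different minimizations, and the bigon-removal isotopies that reduce $|l\cap\gamma_i|$ can increase $|l\cap D|$, so you cannot assume both simultaneously without an argument (or you should phrase the lower bound homologically/combinatorially so that no such simultaneous normal form is needed). In short: upper bound agrees with the paper; the lower bound has a genuine gap at the per-annulus claim, and the splicing/winding paragraph is a restatement of what must be proved rather than a proof.
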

\begin{proof}
From the cycle of alternating tangle structure of $D$, the link diagram on $F(D)$ is as shown in the figure below.
In this figure, vertical lines correspond to the cutting loops. Then the longitudes are isotopic to the horizontal lines. Each circle represents an alternating 2-tangle, which has at least one crossing inside. Therefore, the horizontal lines minimize the number of intersections. Thus, $min|l \cap D| =$ $\#$\{maximally connected alternating tangle of $D$\}.
\end{proof}

\centerline{\includegraphics[scale=0.2]{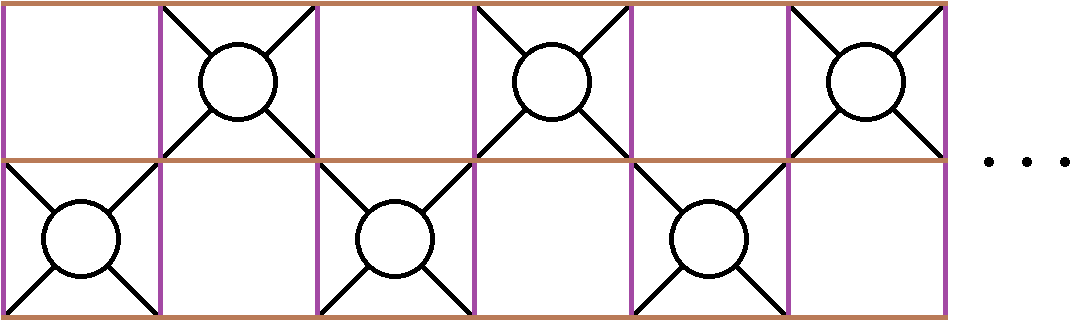}}

\begin{lemma}\label{core-cuttingloop}
Let $D$ be a prime link diagram with $g_T(D)=1$ which is not adequate. Let $c$ be a loop crossing of $D$, and $\mu$ a simple loop on $F(D)$ as in Lemma \ref{coreloop}. Then there exists a cutting loop $\gamma$ of $F(D)$ which is isotopic to $\mu$.    
\end{lemma}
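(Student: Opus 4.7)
The plan is to show $\mu$ is isotopic on $F(D)$ to some cutting loop $\gamma$ by arguing that both bound meridian discs in the same solid torus component of $S^3 \setminus F(D)$. Since $g_T(D) = 1$, the surface $F(D)$ is a torus, so $S^3 \setminus F(D)$ is a union of two open solid tori. By Lemma \ref{coreloop}, $\mu$ bounds a disc $V$ in one of these solid tori, with $V \cap S^2$ a simple arc $\nu$ satisfying $\partial \nu \subset D$. By Theorem \ref{surgerytheorem}, there is a cutting loop $\gamma$ on $F(D)$ bounding a cutting disc $U_\gamma$ in one of the two solid tori. The first step is to verify that $V$ and $U_\gamma$ lie in the same solid torus $H_0$: tracing through the proofs of Lemma \ref{coreloop} and Theorem \ref{surgerytheorem}, both discs are constructed inside balls of the decomposition of $S^3$ by $S^2$, crossing balls, and state discs (Lemma 3.1 of \cite{ADK}), and these balls uniformly lie in the solid torus $H_0$ sitting between the $s_A$- and $s_B$-discs of $F(D)$ (equivalently, the component not containing infinity).

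Next I would show that $\mu$ is essential on $F(D)$. If $\mu$ were null-homotopic on the torus $F(D)$, it would bound a disc $D_0 \subset F(D)$, and combining $D_0$ with $V$ yields a sphere in $S^3$ forcing $\mu$ to be trivial in $H_0$ as well. However, $\mu$ is by construction the core of a non-trivial loop $l(c)$ in the corresponding (all-$A$ or all-$B$) ribbon graph, and this loop contributes to the genus of the Turaev surface $F(D)$. Hence $\mu$ represents a non-trivial class in $H_1(F(D))$, and in particular is non-separating on the torus.

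Once $\mu$ is essential on $F(D) = \partial H_0$ and bounds the compressing disc $V \subset H_0$, it must be a meridian of the solid torus $H_0$; by the same reasoning $\gamma$ is also a meridian of $H_0$. Since the meridian of a solid torus is unique up to isotopy on the boundary torus, $\mu$ and $\gamma$ are isotopic on $F(D)$, yielding the desired cutting loop. The hardest part will be the second step, verifying that the core $\mu$ of the ribbon graph loop $l(c)$ represents a non-trivial homology class in $H_1(F(D))$: this requires tracing how loops in the ribbon graph descend to cycles on $F(D)$ after capping off the boundary state circles with the complementary state discs, and arguing that the edge $l(c)$ is not homologous to a sum of those boundary cycles in the underlying graph of the ribbon graph.
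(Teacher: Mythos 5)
There is a genuine gap, and it sits exactly at the step you flagged as routine: the claim that $V$ and $U_\gamma$ lie in the same solid torus. The two complementary solid tori of $F(D)$ are not distinguished by ``containing infinity'' or by ``sitting between the $s_A$- and $s_B$-discs''; they are distinguished by the checkerboard coloring, one meeting $S^2$ in the white faces of $D$ and the other in the black faces, and the balls of the decomposition from \cite{ADK} are distributed between \emph{both} of them (the proof of Theorem \ref{surgerytheorem} only says each ball lies in \emph{one} of the handlebodies). A cutting disc lies in the handlebody determined by the color of the face containing its cutting arc, while $V$ lies in the handlebody determined by whether $c$ is an $A$-loop or a $B$-loop crossing, and these need not agree: for a cycle of two alternating $2$-tangles there are cutting loops isotopic to the meridian and others isotopic to the longitude, compressing in different handlebodies, and the core of a $B$-loop crossing may well be isotopic to the longitude while your chosen $\gamma$ is a meridian. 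So ``fix a cutting loop $\gamma$ from Theorem \ref{surgerytheorem}, then both are meridians of the same solid torus, hence isotopic'' does not go through; the lemma only asserts that \emph{some} cutting loop is isotopic to $\mu$, and which one depends on $\mu$.

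What is actually needed, and what the paper does, is a case analysis using the Turaev genus one structure: by Theorem \ref{Turaev1} the diagram is a cycle of alternating $2$-tangles, and $\mu$ is an essential loop meeting $D$ twice. If there are exactly two tangles, both the meridian and the longitude are realized by cutting loops, so $\mu$ is isotopic to one of them in either case. If there are more than two tangles, Lemma \ref{intersectionnumber} says any longitude meets $D$ in more than two points, so $\mu$ (which meets $D$ twice) must be isotopic to the meridian, i.e.\ to the cutting loop. Your proposal never invokes Lemma \ref{intersectionnumber} or the tangle count, and without it the ``same handlebody'' assertion is precisely the content that remains unproved. Separately, your essentiality argument for $\mu$ (``the ribbon-graph loop contributes to the genus'') is also too quick: a nugatory crossing gives a loop in a ribbon graph whose core is contractible on $F(D)$, so being a ribbon-graph loop does not by itself force $\mu$ to be homologically nontrivial; some use of primeness or of the explicit genus-one structure is needed there as well.
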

\begin{proof}
From Lemma \ref{coreloop}, $\mu$ is either meridian or longitude. If the number of maximally connected alternating tangles of $D$ is two then we can find a cutting loop which is isotopic to the meridian, and another cutting loop which is isotopic to the longitude. 
If the number of maximally connected alternating tangles of $D$ is greater than two, and if $\mu$ is not isotopic to $\gamma$, then by the Lemma \ref{intersectionnumber}, $|\mu \cap D| > 2$. Therefore $\mu$ is isotopic to $\gamma$. 
\end{proof}

\begin{remark}\label{core-cuttingloopremark}
Lemma \ref{core-cuttingloop} implies that the cutting arc $\delta$ and the simple arc $\nu$ in Lemma \ref{coreloop} are parallel, as in the figure below (left).
In other words, if we surger $D$ along $\nu$, it reduces the Turaev genus of $D$ by one.
\end{remark}

\centerline{\includegraphics[scale=0.3]{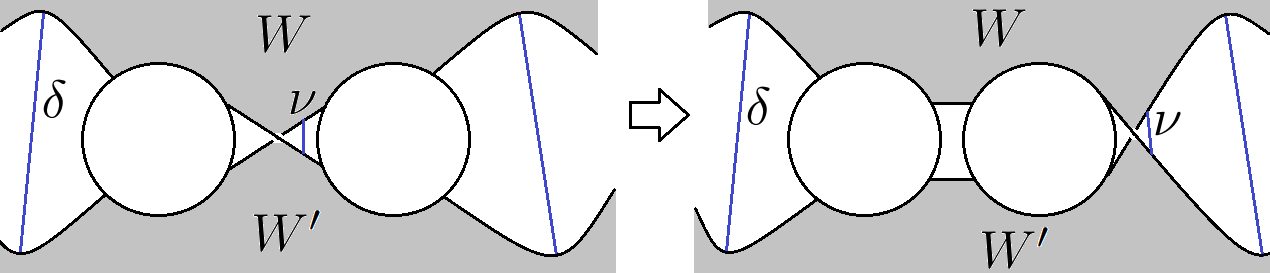}}

\begin{proof}[Proof of the Theorem \ref{almostalternating}]

Let $D_1$ be a prime link diagram of $L$ with $g_T(L)=1$. Assume that $D_1$ has more than two maximally connected alternating $2$-tangles and cutting loops are isotopic to the meridian. By Lemma \ref{core-cuttingloop} and Remark \ref{core-cuttingloopremark}, we can flype $D_1$ as in the figure above to collect all loop crossings into one twist region and reduce all possible pairs of crossings in twist region by Reidemeister-\textrm{II} moves. Note that flypes and Reidemeister-\textrm{II} moves above do not change the Turaev genus. 


If the resulting diagram $D_2$ has more than two maximally connected alternating tangles, then the set of all loop crossings of $D_2$ and the set of crossings in the twist region are the same. Moreover, By Lemma \ref{intersectionnumber}, none of them can be an $AB$-loop crossing. All loop crossings have the same sign, hence, $D_2$ is a semi-adequate diagram, which contradicts our assumption that $L$ is inadequate. Hence, $D_2$ has two maximally connected alternating tangles, so there are two non-isotopic cutting loops. Therefore, $D_2$ can have loop crossings which are not in the twist region above. Then without loss of generality, the configuration of $D_2$ is one of the figures in the figure below (left), in which the crossings in the figures are possible loop crossings. Then we can see from the figure below (right) that $D_2$ has $B$-loop crossings if and only if one of the maximally connected alternating $2$-tangles contains only one crossing. Therefore, $D_2$ is an almost-alternating diagram.
\end{proof}
\centerline{\includegraphics[scale=0.2]{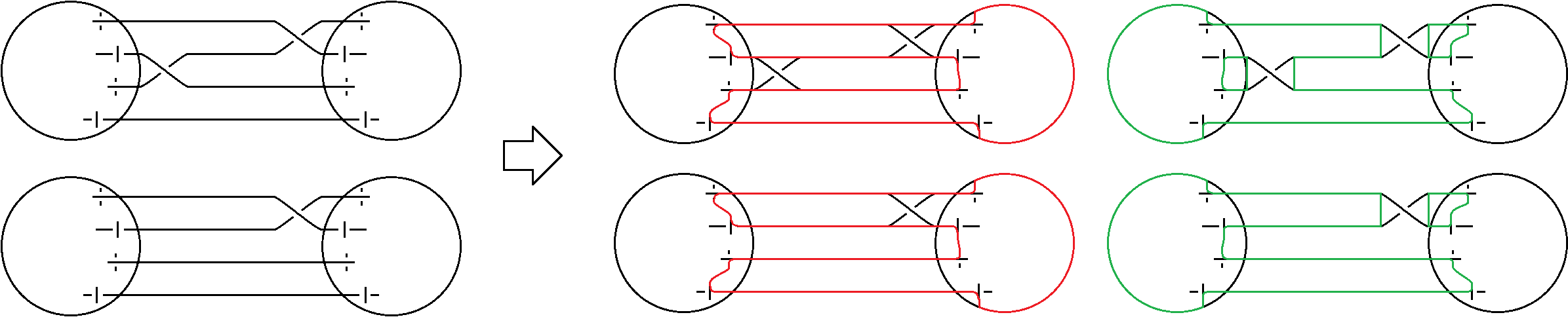}}
\begin{cor}
Let $D$ be a reduced Turaev genus one diagram of a trivial knot. Then there exists a sequence of Turaev genus one diagrams
$$D = D_1 \rightarrow \ldots D_k = D'_1 \rightarrow \dots D'_l = C_m$$
which satisfy the following :
\begin{enumerate}
\item $D_{i+1}$ is obtained from $D_i$ by a flype or a Reidemeister II-move,
\item Each $D'_i$ is almost-alternating,
\item $D'_{i+1}$ is obtained from $D'_i$ by a flype, an untongue or an untwirl move.
\item $C_m$ is the diagram in Figure below (c).
\end{enumerate}
\end{cor}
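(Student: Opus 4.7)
The plan is to produce the required sequence in two phases that mirror the two segments of the statement: a first phase $D=D_1\to\cdots\to D_k=D'_1$ that reaches an almost-alternating diagram using only flypes and Reidemeister-II moves, and a second phase $D'_1\to\cdots\to D'_l=C_m$ that reduces within almost-alternating diagrams of the unknot using flypes, untongues, and untwirls.

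For Phase 1 I would first use Theorem \ref{Turaev1} to put $D$ into its cycle-of-alternating-$2$-tangles form. The key preliminary observation is that, although the unknot is globally an adequate link, the specific diagram $D$ is inadequate: for any adequate or semi-adequate diagram with at least one crossing the Kauffman-bracket span estimate forces a non-trivial Jones polynomial, whereas the Jones polynomial of the unknot is $1$. With $D$ inadequate, Lemma \ref{coreloop}, Lemma \ref{core-cuttingloop} and Remark \ref{core-cuttingloopremark} all apply, and I would then run the very same reduction used in the proof of Theorem \ref{almostalternating}: iteratively flype each loop crossing into a single twist region, then cancel opposite-sign pairs there by Reidemeister-II moves. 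Each such move preserves the underlying link and the Turaev genus, so we stay inside the class of $g_T=1$ diagrams and obtain the sequence $D_1,\dots,D_k$ satisfying (1). The case analysis that ends the proof of Theorem \ref{almostalternating} then shows $D_k$ is almost-alternating; the one place where that proof invokes ``$L$ is inadequate'' (to rule out $D_k$ being semi-adequate with more than two maximally connected alternating tangles) I would replace, for the unknot, by the direct Jones-polynomial obstruction just mentioned.

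For Phase 2 I would set $D'_1=D_k$ and appeal to Tsukamoto's classification of almost-alternating diagrams of the unknot: every such diagram reduces to the standard clasp diagram $C_m$ of Figure~(c) through a finite sequence of flypes, untongue moves, and untwirl moves. Each of these moves preserves both the underlying link and the property of being an almost-alternating (hence Turaev genus one) diagram, so the resulting sequence satisfies conditions (2), (3), and (4).

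The main obstacle is the mismatch between the hypothesis of Theorem \ref{almostalternating} (\emph{inadequate} link) and the setting here (the \emph{adequate} unknot). The work is in isolating the purely diagrammatic content of that proof and verifying that the only place link-level inadequacy is used can be replaced by the statement that no semi-adequate diagram of the unknot carries a positive number of crossings. A secondary bookkeeping point is to check that none of the flypes or Reidemeister-II moves in Phase 1 inadvertently drops the Turaev genus below one; this follows because, so long as more than two maximally connected alternating tangles persist, the diagram remains non-alternating, and the reduction halts exactly when Phase 1 produces the almost-alternating diagram $D'_1$.
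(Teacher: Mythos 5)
Your overall route is the same as the paper's: Phase 1 is the diagrammatic content of the proof of Theorem \ref{almostalternating} (flype the loop crossings into one twist region and cancel pairs by Reidemeister~II moves, neither of which changes the Turaev genus), with the link-level inadequacy hypothesis replaced by the fact that the given diagram of the unknot is itself an inadequate diagram, and Phase 2 is exactly Tsukamoto's Theorem~5 for almost-alternating diagrams of the trivial knot. The paper's proof consists of precisely these two steps, opening with the assertion that every reduced diagram of the trivial knot is inadequate.

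The gap is in your justification of that opening assertion. The Kauffman-bracket span estimate does rule out an \emph{adequate} diagram of the unknot with a crossing: for an adequate diagram both extreme coefficients are $\pm 1$, so the bracket has span $2c+2(|s_A|+|s_B|)-4>0$, whereas the bracket of any diagram of the unknot is a monomial. But for a merely \emph{semi-adequate} (say $A$-adequate) diagram only the top coefficient at degree $c+2|s_A|-2$ is guaranteed nonzero; the bound on the minimal degree is only an inequality, so a monomial bracket concentrated at the top degree is not excluded, and ``semi-adequate with a crossing forces nontrivial Jones polynomial'' does not follow from the span estimate. This is exactly the case your Phase 1 needs, because the step of the proof of Theorem \ref{almostalternating} you must rerun is the one excluding that $D_2$ be semi-adequate when it has more than two maximally connected alternating tangles. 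To close the gap you should instead invoke the theorem of Lickorish and Thistlethwaite that semi-adequate diagrams with at least one crossing represent nontrivial links (proved with the two-variable Kauffman polynomial), or simply quote, as the paper does, the known fact that every reduced diagram of the trivial knot is inadequate; with that substitution the rest of your outline matches the paper's argument.
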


\begin{proof}
Every reduced diagram of a trivial knot is inadequate. 
The proof of Theorem \ref{almostalternating} implies that every reduced prime diagram $D$ of the trivial knot with $g_T(D)=1$ can be changed to an almost-alternating diagram by flypes and Reidemeister II-moves. In Theorem $5$ of \cite{Ts1}, Tsukamoto proved that every almost-alternating diagram of the trivial knot can be changed to $C_m$ by flypes, untongue moves and untiwrl moves via a sequence of almost-alternating diagrams.
\end{proof}

\centerline{\includegraphics[scale=0.3]{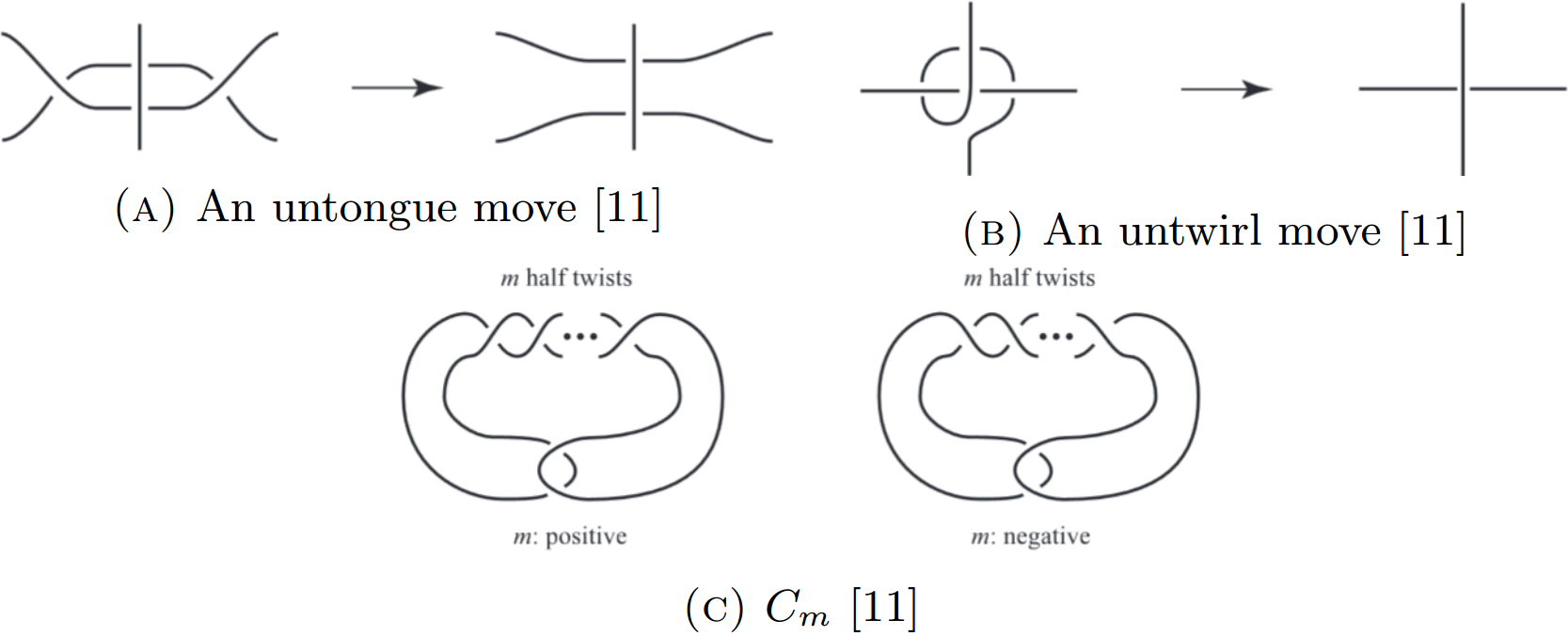}}

\section{Classification of Turaev genus two diagrams}
A set of disjoint simple loops on $S^2$ is said to be \textit{concentric} if the annular region on $S^2$ bounded by any two curves does not contain a curve which bounds a disc inside the region. 

\begin{theorem}\label{concentric}
Let $\delta$ be a cutting arc of a prime non-alternating diagram $D$ with $g_T(D)=g$. Assume that $\delta$ is in a black face of $D$. If we surger $D$ along $\delta$ to get $D_1$, then $D_1$ satisfies the following :\begin{enumerate}

\item The composite circles of $D_1$ are concentric. 
\item Let $D_2$ be a link diagram obtained from $D_1$ by surgery along every arc which is the intersection of a black face and a composite circle of $D_1$. Then each component of $D_2$ is prime and the sum of Turaev genera of all components is $g - 1$. 
\end{enumerate}
\end{theorem}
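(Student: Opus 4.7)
The approach is to use the surgery of Theorem \ref{surgerytheorem} on $\delta$ to produce $D_1$, to exploit that $D$ and $D_1$ differ only by a local move near $\delta$, and then to control the composite circles of $D_1$ by their interaction with the two new parallel arcs $\delta^+,\delta^-$ created by the surgery. The overall strategy for part (1) is to normalize each composite circle $C$ of $D_1$ into a standard form and then read off concentricity from this form; for part (2) it is to combine concentricity with the additivity of the Turaev genus under connect sum decomposition.

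For part (1), I want to show that every composite circle $C$ of $D_1$ can be isotoped to meet each of $\delta^+,\delta^-$ in exactly one point. If $C$ is disjoint from $\delta^+\cup\delta^-$, an innermost-disc argument pushes $C$ off the surgery region entirely, and $C$ then becomes a composite circle of the diagram $D$, contradicting primality of $D$. If $C$ meets only one of $\delta^+,\delta^-$, say $\delta^+$, it must do so in an even number of points, and an innermost bigon between $C$ and $\delta^+$ would yield either a removable bigon or a composite circle of $D$, again contradicting primality. Once $C$ is in normal form, it crosses the thin strip $R$ bounded by $\delta^+\cup\delta^-$ by a single arc and closes up by a single arc in the disc $S^2\setminus R$. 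Given two composite circles $C_1,C_2$ in this form, their arcs across $R$ are linearly ordered, and I would argue that their outside arcs must be nested compatibly with this ordering: any violating configuration cuts off a sub-disc of $S^2\setminus R$ whose interior contains crossings of $D_1$ but which can be closed up across $R$ to yield a composite circle of $D$, once more contradicting primality. This establishes concentricity.

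For part (2), each composite circle $C$ of $D_1$ has its two complementary arcs in faces of opposite colors, because at each of the two intersection points of $C$ with $D_1$ the two faces on opposite sides of $C$ are adjacent across an edge and hence have opposite colors; so exactly one of the two complementary arcs of $C$ lies in a black face. Surgery along that black-face arc is the standard operation that splits the connect summand represented by $C$, and Turaev genus is additive under the resulting connect-sum decomposition. By the concentricity from part (1), after surgering along every such black-face arc the resulting $D_2$ has no composite circles --- any new composite circle in a component would lift to a composite circle of $D_1$, which by concentricity must coincide with one of the circles already surgered away. Hence each component of $D_2$ is prime. Summing Turaev genera over the components and using $g_T(D_1)=g-1$ from Theorem \ref{surgerytheorem} gives the genus claim.

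The main obstacle will be the concentricity step, specifically the nesting-compatibility claim for the outside arcs of composite circles: reducing any ``bad'' nesting to a composite circle of $D$ requires careful use of primality of $D$ together with the fact that $\delta$ lies entirely in a single black face, and one must be careful to distinguish the isotopy of $C$ through the surgery region from genuine intersections with $\delta^\pm$.
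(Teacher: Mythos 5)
Your part (1) argument breaks down at the very first step, the claimed normal form. The two new arcs $\delta^{+},\delta^{-}$ produced by the surgery are sub-arcs of two \emph{edges} $e,e'$ of $D_1$, and $e,e'$ separate the merged white face $W_1=W\cup(\text{surgery strip})\cup W'$ from the two halves $B_1,B_1'$ into which the old black face $B$ is cut. A composite circle of $D_1$ never enters $B_1$ or $B_1'$ (this is exactly Claim 2 in the paper's proof: any black face met by a composite circle must, by Lemma \ref{primelemma}, be adjacent to $W_1$ along two \emph{old} edges, and $B_1,B_1'$ each share only the single edge $e$, resp.\ $e'$, with $W_1$), so in fact \emph{no} composite circle of $D_1$ meets $\delta^{+}\cup\delta^{-}$ at all; your normal form holds for none of them, and a circle meeting $\delta^{+}$ and $\delta^{-}$ once each is not even geometrically possible for a composite circle, since its second complementary arc would have to lie simultaneously in $B_1$ and $B_1'$. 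The complementary branch of your dichotomy is also wrong: a composite circle of $D_1$ disjoint from $\delta^{\pm}$ typically \emph{cannot} be pushed off the surgery strip, because its white arc runs longitudinally through the strip, entering and leaving through the strip's open ends (the removed sub-arcs of the two cut edges of $D$). Viewed in $D$, such a circle crosses the diagram four times, so it is not a composite circle of $D$ and no contradiction with primality of $D$ arises; your innermost-disc argument only removes arcs with both endpoints on the same end of the strip and cannot remove the end-to-end arcs, which are exactly the ones that occur.

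The paper's route is different and avoids this: it first shows every composite circle of $D_1$ must pass through the merged white face $W_1$ (a new composite circle can only arise from the joining of the two white faces, again via Lemma \ref{primelemma}), then that each black face of $D_1$ meets at most one composite circle, and finally it connects the midpoints of $e$ and $e'$ by an arc $\theta$ crossing the strip transversally; since every composite circle crosses the strip by a single longitudinal arc, $|\theta\cap\gamma_i|=1$ for all $i$, and a non-concentric configuration would force an even intersection number with some $\gamma_i$. If you want to salvage your approach, replace ``meets $\delta^{+}$ and $\delta^{-}$ once'' by ``crosses the strip in a single arc running between its two ends,'' and prove that statement via the $W_1$ argument; the nesting then follows as in the paper. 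Your part (2) outline is essentially the paper's (black arc surgery plus additivity of Turaev genus and $g_T(D_1)=g-1$), but the assertion that a composite circle of a component of $D_2$ ``lifts'' to one of $D_1$ is precisely the point needing proof: the faces of $D_2$ are not faces of $D_1$ (the white face $W_1$ has become a multiply-punctured face $W_2$, and the surgered black faces are subdivided), and the paper supplies the missing step by rerouting the white arc inside $W_2$ to an arc inside $W_1$ with the same endpoints before concluding a contradiction.
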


\begin{proof}
Let $B$ be a black face of $D$ which contains $\delta$. Let $W$ and $W'$ be white faces of $D$ such that $\partial \delta \cap \partial W \neq \emptyset$ and $\partial \delta \cap \partial W' \neq \emptyset$. Surgery along $\delta$ joins $W$ and $W'$ into $W_1$ and divides $B$ into $B_1$ and $B'_1$ (see figure below).
Every other face of $D$ is not changed by surgery, so it is a face of $D_1$ as well. 
\begin{claim} \label{W_1}
Every composite circle of $D_1$ intersects $W_1$.
\end{claim}
Assume there exists a composite circle of $D_1$ which does not intersect $W_1$. Then there exists a different white face $W'_1$ of $D_1$ which shares two edges with some black face $B'$ of $D_1$. $W'_1$ can be considered as a white face of $D$ and by Lemma \ref{primelemma}, $W'_1$ shares only one edge with other black faces of $D$, so $B'$ is a join of two black faces of $D$. However, surgery along $\delta$ cannot join two black faces, which is a contradiction.
\begin{claim}\label{black1} 
Every black face of $D_1$ intersects at most one composite circle of $D_1$.
\end{claim}
By Claim \ref{W_1}, every black face which intersects composite circles is adjacent to $W_1$. Every black face of $D$ except $B$ is not changed by surgery, so Lemma \ref{primelemma} implies each black face intersects at most one composite circle. Now, $B$ shares one edge each with $W$ and $W'$. After surgery, those two edges are changed to two edges $e$ and $e'$ in $D_1$, each on the boundary of different black faces. Therefore, $W_1$ shares one edge each with $B_1$ and $B'_1$, so $B_1$ and $B'_1$ do not intersect with any composite circle of $D_1$.  See the figure below. 

\centerline{\includegraphics[scale=0.3]{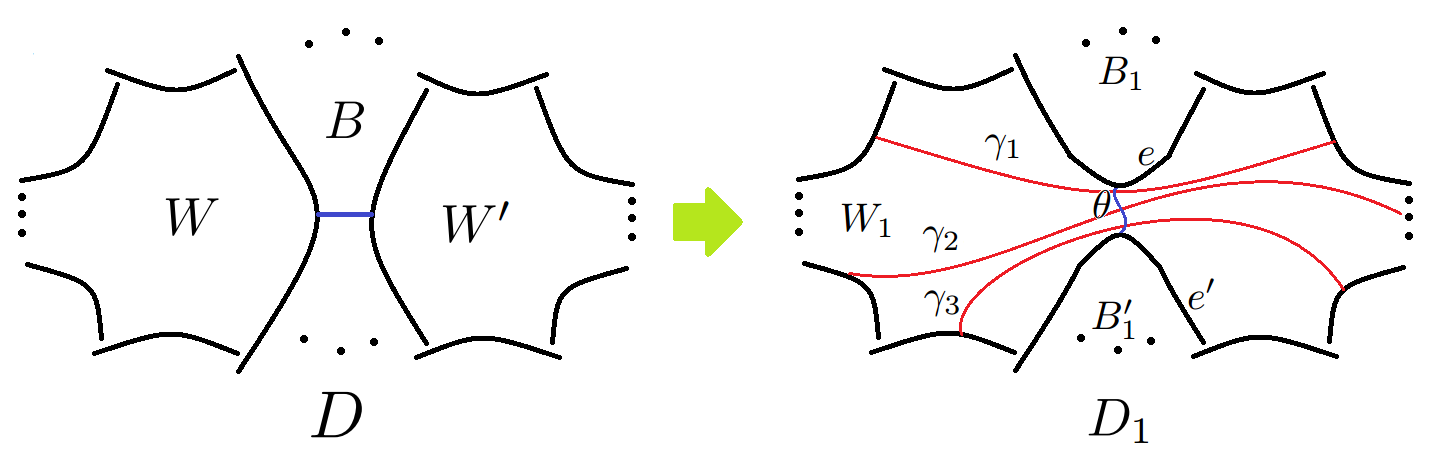}}
\begin{claim}\let\qed\relax
The composite circles of $D_1$ are concentric.
\end{claim}
Let $\{\gamma_i\}$ be the set of composite circles of $D_1$. By Claim \ref{black1}, $\forall i,j$, $i \neq j$, $\gamma_i \cap \gamma_j \subset W_1$. 
The number of intersections is even, so we can remove all intersections by perturbing composite circles inside $W_1$.
 By Lemma \ref{primelemma}, $\partial \gamma_i \cap W_1$ consists of two points one in $\partial W$ and another in $\partial W'$. By the proof of Claim \ref{black1}, $\gamma_i \cap e = \gamma_i \cap e' = \emptyset$. Then we can connect midpoints of $e$ and $e'$ with a simple arc $\theta$ such that $|\theta \cap \gamma_i| = 1, \forall i$ (see figure above).
 If the composite circles are not concentric, then there exists a triple $(\gamma_1, \gamma_2, \gamma_3)$ such that $\gamma_2$ bounds a disc inside an annulus on $S^2$ bounded by $\gamma_1$ and $\gamma_3$. 
 Then 
 $\theta$ intersects $\gamma_2$ an even number of times, which is a contradiction. 
Now we will complete the proof by showing (2). The sum of Turaev genera of all components of $D_2$ is $g_T(D)-1$ by Theorem \ref{surgerytheorem} and additivity of Turaev genera of diagrams under connected sum. Assume that one of the components of $D_2$ is composite. Suppose $W_1$ is changed to $W_2$, which is homeomorphic to an $n$-holed disc after surgery. By the same argument as in Claim \ref{black1}, every black face of $D_1$ which intersects composite circles of $D_1$ is divided into two faces and each face shares exactly one edge which apears after surgery with $W_2$. Therefore, every composite circle of $D_2$ intersects with edges of $D_1$. Now consider each composite circle as a union of two arcs, each of them intersects a face of $D_2$. Using the checkerboard coloring of $D_2$, we can label each arc as a black or white arc. Every face of $D_2$ except $W_2$ is a subset of a face of $D_1$. Therefore, every black and white arc except the one inside $W_2$ is a simple arc inside a face of $D_1$. For the white arc inside $W_2$, we can choose another arc with the same endpoints, which is an arc inside $W_1$ because its endpoints are on the edges of $D_1$. Then the black and white arcs form a composite circle of $D_1$, which contradicts our assumption that we surgered along all composite circles to get $D_2$. This completes the proof of Theorem \ref{concentric}. 
\end{proof}

\begin{proof}[Proof of Theorem \ref{Turaev2}]
Let $D$ be a prime link diagram on $S^2$ with $g_T(D) = 2$. Choose a cutting arc $\delta$ using an algorithm from the proof of Theorem \ref{surgerytheorem} and assume that $\delta$ is in a black face of $D$. We surger $D$ along $\delta$ to get $D_1$ with $g_T(D_1)=1$. $D_1$ has a checkerboard coloring induced by the checkerboard coloring of $D$. 

Let $D'$ be obtained from $D$ by surgery along an arc $\tau$. We define the \textit{attaching edge} $\tau'$ to be midpoint$(\tau)\times [ - \epsilon, \epsilon ]$, with $(\tau, \epsilon)$ as in the definition of surgery along a cutting arc, as indicated by a dotted arc. Note that if we do surgery along $\tau'$, then the attaching edge is $\tau$, and we get $D$ again.

Consider every composite circle of $D_1$. 
We surger $D_1$ along black arcs to get $D_2$ which consists of exactly one prime diagram $T$ with $g_T(T)=1$, and several prime alternating diagrams. Choose the checkerboard coloring of $T$ that comes from $D$. Note that every attaching edge is in one white face of $T$, as in the figure below.

\centerline{\includegraphics[scale=0.15]{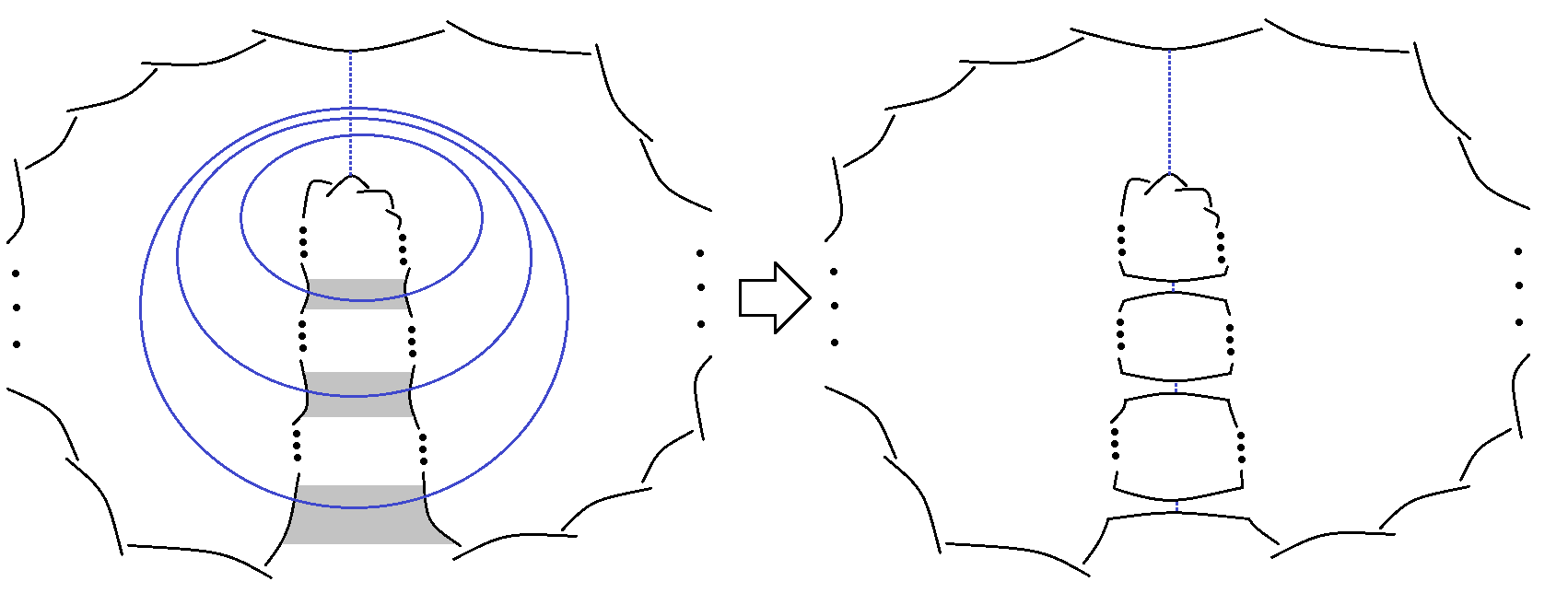}}

Now we need to reconstruct $D$ from $T$ and the alternating diagrams. Theorem \ref{concentric} implies components of $D_2$ are pairwise connected by exactly one attaching edge, if any, and no more than two attaching edges in total. Below, we consider all possible cases for attaching $T$ and the alternating components of $D_2$:

\begin{case*}\textit{Every cutting arc of $T$ is inside a black face of $T$.}
\end{case*}
Every other component of $D_2$ is inside a white face $W$ of $T$, so we have four different sub-cases.

\lowerromannumeral{1}) $W$ has non-alternating edges on its boundary.
See the figure below, where $W$ is the yellow face shown:

\centerline{\includegraphics[scale=0.2]{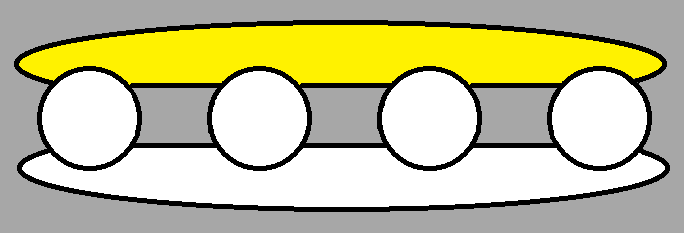}}

If two attaching edges are connected to two alternating edges of the same alternating tangle of $T$, then we have an alternating $4$-tangle, and the alternating tangle structure of $D$ is shown in Figure \ref{case1}.
If the two attaching edges are connected to two alternating edges in different alternating tangles of $T$, then we have two alternating $3$-tangles, and the alternating tangle structure of $D$ is shown in Figure \ref{case2}.
If one of the attaching edges are connected to a non-alternating edge of $\partial W \subset T$, then the sign of crossings of such an alternating diagram is the same as one of the alternating tangles adjacent to such a non-alternating edge. Hence, we can merge the alternating tangles, as shown in the figure below. Therefore, in this case, the alternating tangle structure is the same as one of the above cases. 
\centerline{\includegraphics[scale=0.15]{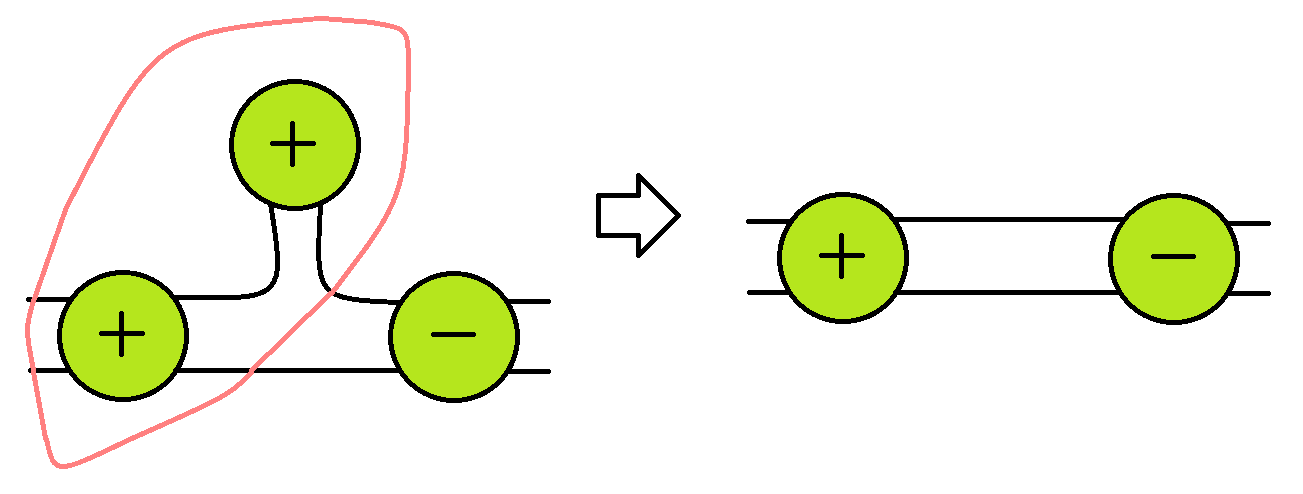}}

\lowerromannumeral{2}) $W$ is contained in one of the alternating tangles, and $W$ is adjacent to a black face $B$ which has a cutting arc inside, as in the figure below:

\centerline{\includegraphics[scale=0.2]{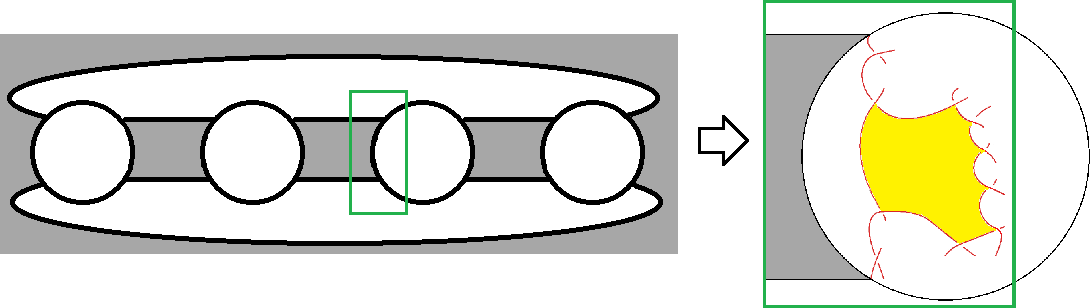}}

If one of the attaching edges is connected to $\partial B$, then we have two possibilities. First, if the sign of the alternating tangle of $T$ and of the alternating diagram are different, then the alternating tangle structure changes as illustrated in the figure below. 
Then we have one alternating $4$-tangle, and the alternating tangle structure of $D$ is shown in Figure \ref{case3}.
If the signs are the same, then we have one alternating $4$-tangle which is not simply connected, and the alternating tangle structure is shown in Figure \ref{case4}.
If there is no attaching edge connected to $\partial B$, then the alternating tangle structure is the same as Figure \ref{case4}.

\centerline{\includegraphics[scale=0.2]{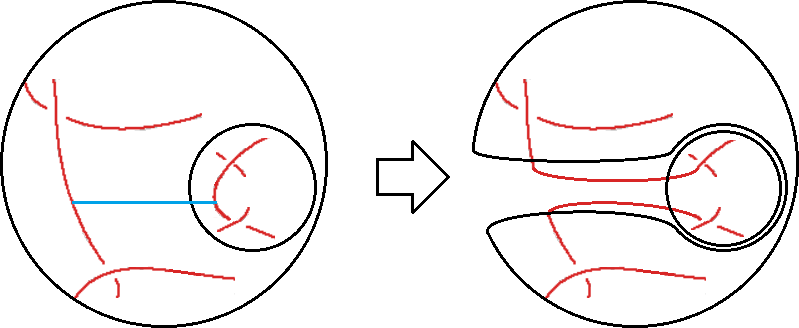}}

\lowerromannumeral{3}) $W$ is contained in one of the alternating tangles, and adjacent to black faces $B$ and $B'$ which each have a cutting arc inside, as in the figure below:

\centerline{\includegraphics[scale=0.2]{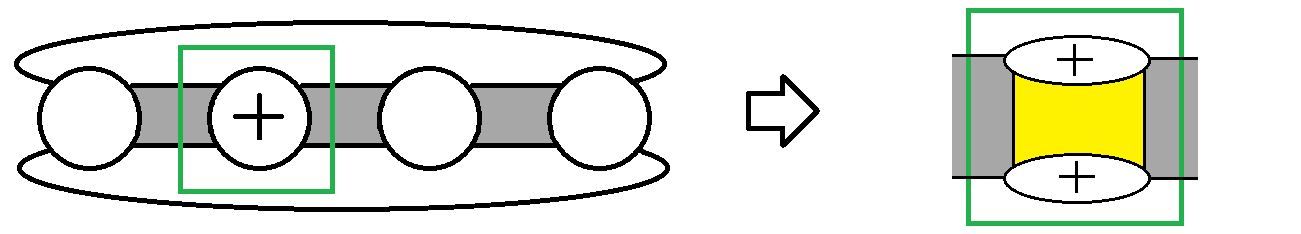}}

If one attaching edge is connected to $\partial B$ and another attaching is connected to $\partial B'$, then we have three possibilities. First, if the sign of the alternating tangle of $T$ and of two alternating diagrams connected to $T$ by two attaching edges are different, then the alternating tangle structure changes as in the figure below (left).
Therefore, every maximally connected alternating tangle is a $2$-tangle, and the alternating tangle structure of $D$ is shown in Figure \ref{case8}.

\centerline{\includegraphics[scale=0.2]{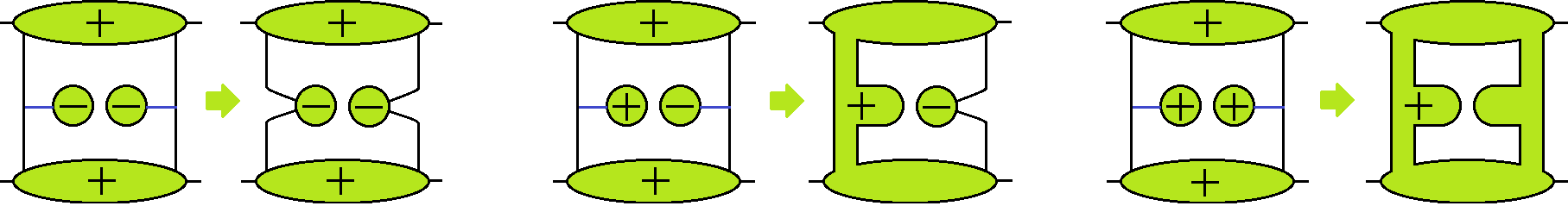}}
If the sign of one of the alternating diagrams is the same as the sign of the alternating tangle of $T$, then we can merge them into one maximally connected alternating tangle as in the figure above (middle). Then we have one alternating $4$-tangle, and the alternating tangle structure of $D$ is shown in Figure \ref{case3}.

If the signs of two alternating diagrams are the same as the sign of the alternating tangle of $T$, then we can merge them into one maximally connected alternating tangle as in the figure above (right). This maximally connected alternating tangle is not simply connected and the alternating tangle structure of $D$ is shown in Figure \ref{case4}.
Other cases are just the same as case \lowerromannumeral{2}) above. 

\lowerromannumeral{4}) A black face adjacent to $W$ cannot have non-alternating edges on its boundary. This case is the alternating tangle structure shown in Figure \ref{case4}.

\begin{case*}\textit{Every cutting arc of $T$ is inside a white face of $T$}
\end{case*}
\lowerromannumeral{1}) $W$ contains a cutting arc of $T$, as in 
the figure below:

\centerline{{\includegraphics[scale=0.2]{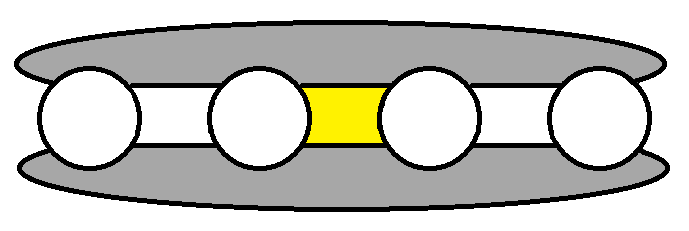}}}

If two attaching edges are connected to alternating edges of $T$, and the two alternating edges of $T$ are in different tangles, then we have two alternating $3$-tangles and the alternating tangle structure is shown in Figure \ref{case5}.
If two attaching edges are connected to alternating edges of $T$, and the two alternating edges of $T$ are in the same alternating tangle, then we have one alternating $4$-tangle and the alternating tangle structure is shown in Figure \ref{case6}. 
If at least one attaching edge is connected to a non-alternating edge of $T$, then the alternating tangle structure changes as in the figure in the proof of Case(1\lowerromannumeral{1}), which implies the same alternating tangle structure as in Figrue \ref{case5} or \ref{case6}. 

\lowerromannumeral{2}) $W$ does not contain a cutting arc, but is adjacent to two black faces $B$ and $B'$ which have non-alternating edges on their boundaries, as in the figure below:

\centerline{\includegraphics[scale=0.2]{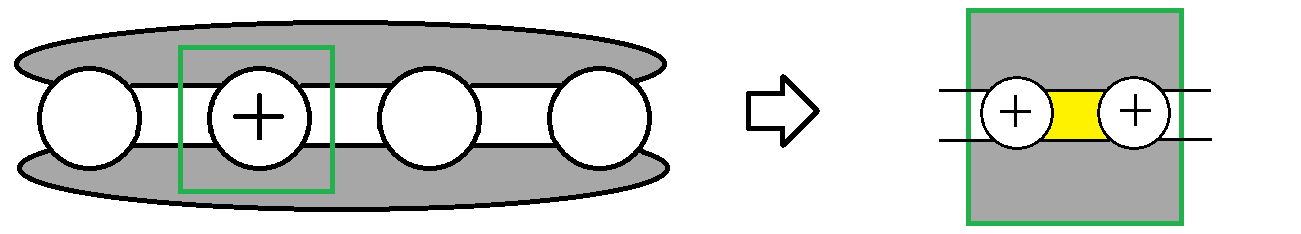}}

Assume that the two alternating tangles adjacent to $W$ are positive tangles, as in the figure above. 
If two attaching edges are not connected to the edges of $\partial B$ nor $\partial B'$ then the alternating tangle structure is the same as in Figure \ref{case4}.  
If exactly one attaching edge is connected to an edge of either $B$ or $B'$, and an alternating diagram attached to it has negative crossings, then the alternating tangle structure changes as in the figure in the proof of Case(1\lowerromannumeral{2}). Therefore, we have one alternating $4$-tangle and the alternating tangle structure is shown in Figure \ref{case6}. If the alternating diagram has positive crossings, then the alternating diagram and the alternating tangle of $T$ merge.
Therefore, it has the same alternating tangle structure as in Figure \ref{case4}.
If two attaching edges are connected to the edges of $B$ and $B'$, and both alternating diagrams attached to $T$ along them have negative crossings, then the alternating tangle structure changes as in left figure in the proof of Case(1\lowerromannumeral{3}). Therefore, every alternating tangle of $D$ is a $2$-tangle, and the alternating tangle structure is shown in Figure \ref{case7}.
Otherwise, the alternating tangle structure of $D$ can be as in Figure \ref{case4} or Figure \ref{case6}.

\lowerromannumeral{3}) $W$ is adjacent to exactly one black face $B$ which has non-alternating edges on its boundary as in the figure below:

\centerline{\includegraphics[scale=0.2]{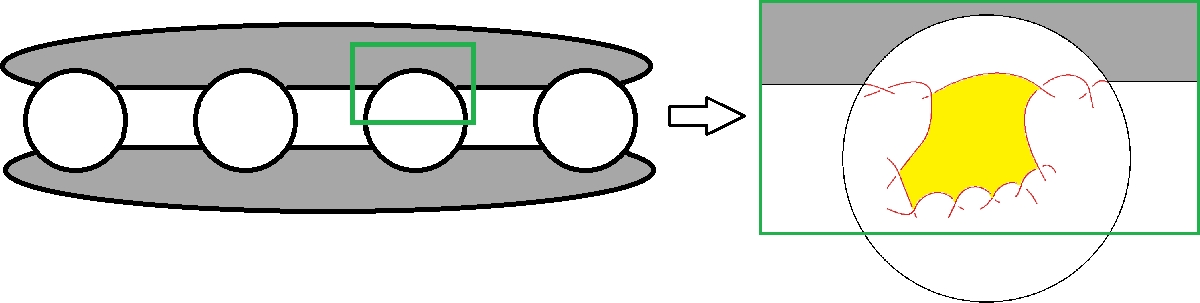}}
If two attaching edges are not connected to $\partial B$, then the alternating tangle structure is shown in Figure \ref{case4}.
If one attaching edge is connected to $\partial B$, then it is as shown in Figure \ref{case4} or Figure \ref{case6}, depending on the sign of the alternating tangle attached to that attaching edge.

\lowerromannumeral{4}) A black face adjacent to $W$ cannot have non-alternating edges on its boundary. This is same case as 1(\lowerromannumeral{4}), which is the alternating tangle structure in Figure~\ref{case4}.

To show that we have considered all the possible cases, we need to show all faces of $T$ are used in the proof. First, all faces of $D$ in the channel region are considered in Case(1\lowerromannumeral{1}) and Case(2\lowerromannumeral{1}). It remains to show that all the faces in the alternating tangles are used in the proof. From the checkerboard coloring and the cycle of alternating $2$-tangle structure, we can show that every face in the alternating tangle can be adjacent to at most two faces in the channel region. Therefore we can categorize every faces in the alternating tangle by the number of adjacent faces in the channel region and  the existence of cutting arcs in adjacent faces. These are considered in the Cases (1\lowerromannumeral{2} - 1\lowerromannumeral{4}) and Cases (2\lowerromannumeral{2} - 2\lowerromannumeral{4}).

Lastly, we show that all eight cases are distinct up to isotopy on $S^2$. First, Case 4 is distinct from all others because it has a non-simply connected alternating tangle. If every ribbon contains no alternating tangles, then Cases 1, 3 and 6 have the same alternating tangle structure. Similarly, Cases 2 and 5 have the same alternating tangle structure. Cases 1,3,6 have a $4$-tangle, and Cases 2,5 have two $3$-tangles, so they are distinct. Cases 7 and 8 are distinct from the others because their alternating tangle structure consists of only $2$-tangles. Case 8 has $2$-tangles adjacent to four others which Case 7 does not, so Cases 7 and 8 are distinct.  
We now distinguish Cases 1, 3 and 6. With many alternating tangles in every ribbon, the single $4$-tangle is connected to four different alternating $2$-tangles. If we orient the boundary of the $4$-tangle, non-alternating edges connected to the boundary have a cyclic ordering. If we compare the three cyclic orderings, then they are distinct up to a cyclic permutation. Therefore, Cases 1, 3 and 6 are all distinct. Similarly, Cases 2 and 5 are distinct. This completes the proof of Theorem \ref{Turaev2}.
\end{proof}

\end{document}